\newcommand{\assign}{:=}
\newcommand{\tmem}[1]{{\em #1\/}}
\newcommand{\tmop}[1]{\ensuremath{\operatorname{#1}}}
\theoremstyle{plain}
\newtheorem{definition}{Definition}
\theoremstyle{plain}
\newtheorem{lemma}{Lemma}
\theoremstyle{plain}
\newtheorem{proposition}{Proposition}
\theoremstyle{remark}
\newtheorem{remark}{Remark}
\theoremstyle{plain}
\newtheorem{theorem}{Theorem}
\begin{document}
\title{Positive mass theorem and free boundary minimal surfaces}
\author{Xiaoxiang Chai}
\date{Nov. 15, 2018}
\begin{abstract}
  Built on a recent work of Almaraz, Barbosa, de Lima on positive mass theorems on
  asymptotically flat manifods with a noncompact boundary, we apply free boundary
  minimal surface techniques to prove their positive mass theorem and study the existence of positive scalar curvature
  metrics with mean convex boundary on a connected sum of the form $(\mathbb{T}^{n-1}\times[0,1])\#M_0$.
\end{abstract}
\email{ChaiXiaoxiang@gmail.com}
\maketitle
\section{Introduction}
An asymptotically flat manifold is used to model an isolated gravitational
system in physics. The positive mass conjecture states that if the system has
nonnegative local mass density, then the system must have nonnegative total
mass measuring at spatial infinity. Schoen and Yau {\cite{schoen1979}} in
1970s established the positive mass theorem for time-symmetric case of the
conjecture using minimal surfaces. They proved the three dimensional case. It
is also called the Riemannian positive mass theorem. When the dimension is
less than eight, the positive mass theorem can be reduced down to dimension
three, see {\cite{schoen-variational-1989}}. Witten {\cite{witten-new-1981}}
found an elegant proof for the non-time-symmetric case (spacetime version)
using spinor techniques and a mathematically rigorous account of his proof can
be found in {\cite{parker-wittens-1982}}. Witten's proof applies for spin
manifolds of all dimensions. As for spacetime version without spin assumption,
there is a recent work of Eichmair, Huang, Lee and Schoen
{\cite{eichmair-spacetime-2015}} which uses marginally outer trapped surface
(abbr. MOTS) to replace minimal surfaces in the argument.

There is also a lot of work to extend positive mass theorem to hyperbolic
settings. {\cite{wang-mass-2001}} and {\cite{chrusciel-mass-2003}} use
Witten-type arguments to prove a positive mass theorem for asymptotically
hyperbolic manifolds. Later Andersson, Cai, and Galloway \
{\cite{andersson-rigidity-2008}} uses the BPS brane action to give a proof of
the non-spin case.

We first recall the definition of a standard asymptotically flat manifold.

\begin{definition}
  \label{AF}(Asymptotically flat) We say that $(M^n, g)$ is asymptotically
  flat with decay rate $\tau > 0$ if there exists a compact subset $K \subset
  M$ and a diffeomorphism $\Psi : M\backslash K \rightarrow \mathbb{R}^n
  \backslash \bar{B}_1 (0)$ such that the following asymptotics holds as $r
  \rightarrow + \infty$:
  \[ |g_{i j} (x) - \delta_{i j} | + r |g_{i j, k} | + r^2 |g_{i j, k l} | = o
     (r^{- \tau}) \]
  where $\tau > \frac{n - 2}{2}$.
\end{definition}

\begin{definition}
  (ADM mass, after Arnowitt, Deser and Misner
  {\cite{arnowitt-canonical-1960}}) Let $(M^n, g)$ be the manifold specified
  above, assume that the scalar curvature $R_g$ of $M$ is integrable, then the
  quantity defined for an asymptotically flat manifold $M$ below
  \[ E_{\ensuremath{\operatorname{ADM}}} = \lim_{r \to + \infty} \{ \int_{S_r}
     (g_{ij, j} - g_{jj, i}) \mu^i dS_r \} \]
  is called the ADM mass. Here, $x = (x_1, \cdots, x_n)$ is the coordinate
  system induced by $\Psi$, $r = |x|$, $g_{i j}$ are the components of $g$
  with respect to $x$ and the comma denotes partial differentiation. $S_r$
  denotes the standard sphere of radius $r$, $\mu^i$ normal to $S_r$ under
  Euclidean metric and the comma denotes partial differentiation.
\end{definition}

The definition of the ADM mass relies on the choice of coordinates and its
geometric invariance of ADM mass is proved by Bartnik
{\cite{bartnik-mass-1986}}. Then the positive mass theorem says that

\begin{theorem}
  (Positive mass theorem {\cite{schoen1979,schoen-positive-2017}}) If $(M, g)$
  is asymptotically flat with scalar curvature $R_g \geqslant 0$ and $R_g$ is
  integrable, then $E_{\ensuremath{\operatorname{ADM}}} \geqslant 0$.
  $E_{\ensuremath{\operatorname{ADM}}} = 0$ if and only if the manifold $(M,
  g)$ is isometric to the standard Euclidean space $(\mathbb{R}^n, \delta)$.
\end{theorem}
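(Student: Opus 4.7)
The plan is to follow Schoen and Yau's minimal surface argument in dimension three, from which the cases $3 \le n \le 7$ follow by the hypersurface descent of \cite{schoen-variational-1989}. First I would carry out the familiar density reductions: by a conformal change $g \mapsto (1 + \epsilon u)^{4/(n-2)} g$ with $u$ solving a suitable elliptic equation, one may assume without loss of generality that $R_g > 0$, that $g$ has harmonic asymptotics near infinity, and that $E_{ADM}$ changes by at most $C \epsilon$. It therefore suffices to rule out $E_{ADM} < 0$ for such a reduced metric; the general inequality $E_{ADM} \ge 0$ then follows by letting $\epsilon \to 0$.

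Assuming for contradiction that $E_{ADM} < 0$, the next step is to construct a complete stable minimal surface $\Sigma$ in $(M^3, g)$ that is asymptotic to a flat Euclidean plane. The crucial observation is that in the asymptotic coordinates the mean curvature of the coordinate planes $\{x_3 = \pm h\}$ has a sign determined by the sign of $E_{ADM}$; when $E_{ADM} < 0$, these planes serve as barriers on opposite sides of $\{x_3 = 0\}$, trapping minimal surfaces between them. Solving a sequence of Plateau problems in large coordinate balls with boundary contours close to large Euclidean circles in $\{x_3 = 0\}$, the barrier enforces interior compactness, and passing to a subsequence yields the desired $\Sigma$, properly embedded, stable, and with planar asymptotics.

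The contradiction is then produced by combining the stability inequality
\begin{equation}
\int_\Sigma \bigl( |A|^2 + \operatorname{Ric}(\nu, \nu) \bigr) \varphi^2 \le \int_\Sigma |\nabla \varphi|^2
\end{equation}
with the traced Gauss equation $2 \operatorname{Ric}(\nu, \nu) = R_g - 2 K_\Sigma - |A|^2$ (valid because $H = 0$), which recasts the inequality as
\begin{equation}
\int_\Sigma \bigl( \tfrac{1}{2} R_g + \tfrac{1}{2} |A|^2 \bigr) \varphi^2 \le \int_\Sigma |\nabla \varphi|^2 + \int_\Sigma K_\Sigma \varphi^2.
\end{equation}
Choosing a logarithmic cutoff $\varphi$ on $\Sigma$, whose Dirichlet energy tends to zero in the two-dimensional setting, and applying Gauss-Bonnet on large geodesic disks, whose boundary integrals tend to $2\pi$ by asymptotic planarity, forces the right-hand side to be controlled by $2 \pi ( \chi(\Sigma) - 1 ) + o(1) \le o(1)$, while the left-hand side stays bounded below by a positive constant because $R_g > 0$ on a compact set.

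For the rigidity statement, if $E_{ADM} = 0$ but $(M, g)$ is not isometric to $(\mathbb{R}^n, \delta)$, I would perturb $g$ by a compactly supported variation preserving $R_g \ge 0$ so that the perturbed ADM energy becomes strictly negative, contradicting the inequality already established; this forces $R_g \equiv 0$, and unique continuation for the harmonic asymptotic coordinates then identifies $(M, g)$ with Euclidean space. The main obstacle in the whole outline is the barrier construction and the subsequent compactness argument producing $\Sigma$: one must extract from the decay hypothesis $\tau > (n-2)/2$ a precise asymptotic formula for the mean curvature of coordinate planes, with a sign matching that of $E_{ADM}$, and rule out that the Plateau-type solutions drift to infinity along an end. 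This is precisely the step in which the negativity of the mass is consumed.
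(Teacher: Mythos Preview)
Your outline is correct and essentially follows the Schoen--Yau minimal surface strategy that the paper sketches in its introduction (and carries out in full in Section~\ref{schoen approach} for the boundary analog). The density reduction, Plateau construction trapped between slabs, passage to a complete stable hypersurface, Gauss equation rewrite of stability, and Gauss--Bonnet contradiction all match.

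There is one genuine technical divergence worth noting. In dimension three you close the argument with the logarithmic cutoff trick, i.e.\ you work with ordinary stability and exploit that the Dirichlet energy of a log cutoff vanishes in the limit. The paper instead (both in its sketch and in the detailed free-boundary version) minimises the area of $\Sigma_{\sigma,a}$ over the height parameter $a\in[-a_0,a_0]$; the divergence computation of $\eta=h^{-2/(n-2)}\partial_n$ forces the minimising height strictly into the interior, and this extra minimisation upgrades stability to \emph{strong} stability, allowing one to insert $\varphi\equiv 1$ directly. The paper explicitly remarks that the log-cutoff route you take is the original \cite{schoen1979} argument and is equally valid in dimension three; the height-minimisation route is preferred because it is what makes the induction to $4\le n\le 7$ work uniformly (one needs test functions that are asymptotically constant, not compactly supported). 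Since you already defer higher dimensions to the descent of \cite{schoen-variational-1989}, this is a matter of packaging rather than substance.
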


The seminal work {\cite{schoen1979}} reveals deep connections between the
geometry of non-compact minimal surface in asymptotically flat 3-manifolds and
non-negative scalar curvature. We briefly outline their ideas below.

Assume the ADM mass $E_{\ensuremath{\operatorname{ADM}}} < 0$, then the metric
can be perturbed into a metric which is conformally flat at infinity (i.e.
outside a compact set) such that the scalar curvature $R_g$ has a strict
positive sign and negativity of the mass is preserved. Then choose a large
number $\sigma > 0$, let
\[ \Gamma_{\sigma, a} = \{(\hat{x}, x_n) \in \mathbb{R}^n : | \hat{x} | =
   \sigma, x_n = a\}, \]
we find a minimal hypersurface $\Sigma_{\sigma, a}$ solving the Plateau
problem in $M$ with boundary $\Gamma_{\sigma, a}$. The deformed metric allows
fixing two coordinate slabs $\{x_n = \pm \Lambda\}$ such that any
$\Sigma_{\sigma, a}$ realizing the minimum among all $\{| \Sigma_{\sigma, a}
|\}_{a \in [- a_0, a_0]}$ lies strictly between the slabs. Moreover, it is
possible to choose a number $a = a (\sigma) \in (- a_0, a_0)$ such that
$\Sigma_{\sigma} = \Sigma_{\sigma, a (\sigma)}$ has the least area among all
$\{\Sigma_{\sigma, a} \}_{a \in [- a_0, a_0]}$. We can take a subsequence
$\sigma_i \to \infty$ such that $\Sigma_{\sigma}$ converge to a strongly
stable minimal hypersurface. The contradiction will follow from the strong
stability and Gauss-Bonnet theorem.

The technical part of their proof is to handle asymptotics. Lohkamp
{\cite{lohkamp-scalar-1999}} observed that if the mass is negative, the metric
can be transformed further into a metric which is Euclidean at infinity. This
allows compactification by identifying edges of a large cube. The compactified
manifold $M'$ has nonnegative scalar curvature and is not flat. By argue that
the $(n - 1)$-th homology group of $M'$ is non-trivial and that there exists
an area-minimizing hence stable minimal hypersurface in $M'$. The stability of
this hypersurface allows a dimension reduction argument from dimension seven
down to dimension three, and the proof of dimension three finishes with the
Gauss-Bonnet theorem.

Recently Almaraz, Barbosa and de Lima in {\cite{almaraz-positive-2016}}
introduce a notion of an asymptotically flat manifold with a non-compact
boundary as well as an ADM mass:

\begin{definition}
  \label{AFb}(Asymptotically flat with a noncompact boundary) We say that $(M,
  g)$ is asymptotically flat with decay rate $\tau > 0$ if there exists a
  compact subset $K \subset M$ and a diffeomorphism $\Psi : M\backslash K
  \rightarrow \mathbb{R}_+^n \backslash \bar{B}_1^+ (0)$ such that the
  following asymptotics holds as $r \rightarrow + \infty$:
  \[ |g_{i j} (x) - \delta_{i j} | + r |g_{i j, k} | + r^2 |g_{i j, k l} | = o
     (r^{- \tau}) \]
  where $\tau > \frac{n - 2}{2}$.
\end{definition}

\begin{definition}
  (ADM mass with a noncompact boundary) {\itshape{ADM mass}} for $M$ is given
  by
  \begin{equation}
    m_{\ensuremath{\operatorname{ADM}}} = \lim_{r \to + \infty} \left\{
    \int_{S_{r, +}^{n - 1}} (g_{ij, j} - g_{jj, i}) \mu^i dS_{r, +}^{n - 1} +
    \int_{S_r^{n - 2}} g_{1 n} \vartheta^1 dS_r^{n - 2} \right\}
    \label{energy-def}
  \end{equation}
  where \ $\mathbb{R}_+^n = \{x \in \mathbb{R} : x_1 \geqslant 0\}$ and
  $\bar{B}_1^+ (0) = \{x \in \mathbb{R}_+^n : |x| \leqslant 1\}$. We also use
  the Einstein summation convention with index ranges $i, j, k = 1, \cdots, n$
  and $a, b, c = 2, \cdots, n$. Observe that along $\partial M$, $\{\partial_a
  \}$ spans $T \partial M$ while $\partial_1$ points inwards. $S_{r, +}^{n -
  1} \subset M$ is a large coordinate hemisphere of radius $r$ with outward
  unit normal $\mu$, and $\vartheta$ is the outward pointing unit co-normal to
  $S_r^{n - 2} = \partial S_{r, +}^{n - 1}$, viewed as the boundary of the
  bounded region $\Sigma_r \subset \Sigma$ .
\end{definition}

We write $m (M, g)$ if we want to emphasize the dependence on the manifold and
the metric, and we write $m_g$ for short if the manifold $M$ is clear from the
context. See Fig. \ref{fb-af} for a hemisphere in such an asymptotically flat
manifold.

\begin{figure}[h]
  \includegraphics{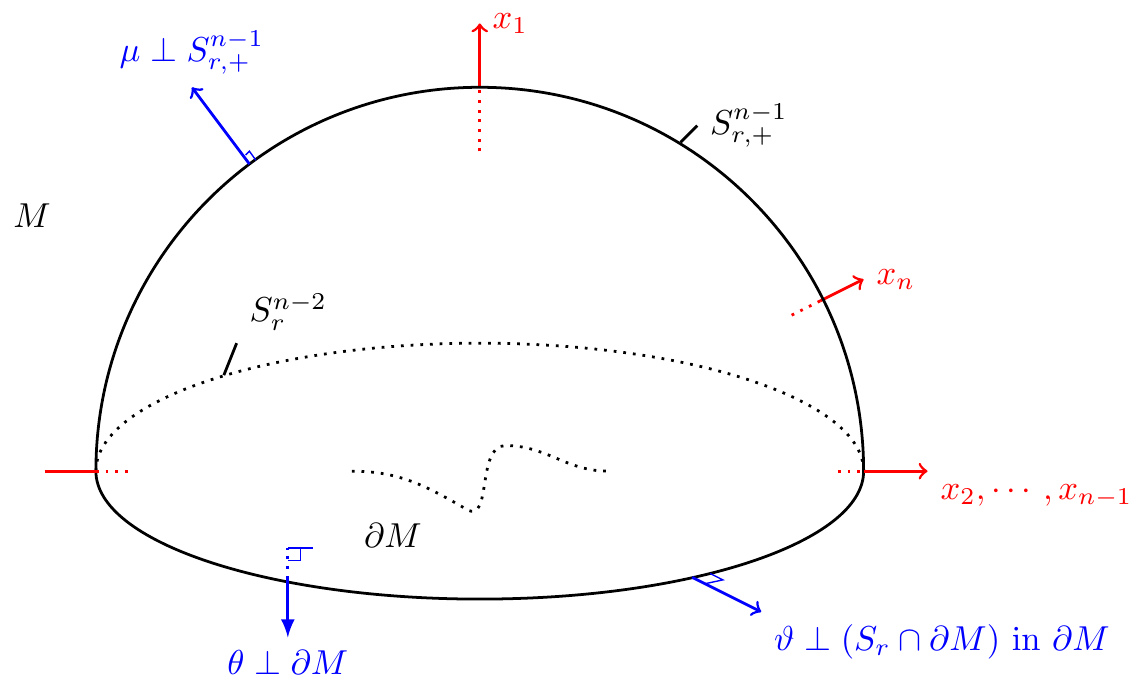}
  \caption{\label{fb-af}A hemisphere of radius $r$ in an asymptotically
  flat manifold with a non-compact boundary.}
\end{figure}

Motivated by the proof by Schoen and Yau {\cite{schoen1979}} using minimal
hypersurface techniques, \ S. Almaraz, E. Barbosa and L. de Lima proved a
positive mass theorem for asymptotically flat manifolds with a non-compact
boundary, more specifically,

\begin{theorem}
  \label{positive mass theorem}{\cite[Theorem 1.3]{almaraz-positive-2016}}
  When $3 \leqslant n \leqslant 7$ and if $(M, g)$ is asymptotically flat $R_g
  \geqslant 0$ and $H_g \geqslant 0$ then $m_{\ensuremath{\operatorname{ADM}}}
  \geqslant 0$, with the equality occurring if and only if $(M, g)$ is
  isometric to $(\mathbb{R}^n_+, \delta)$.
\end{theorem}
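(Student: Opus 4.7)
The plan is to adapt the Schoen--Yau minimal surface proof, replacing closed minimal hypersurfaces by \emph{free boundary minimal hypersurfaces}, that is, hypersurfaces meeting $\partial M$ orthogonally. Assume for contradiction that $m_{\operatorname{ADM}}<0$. The first step is a Lohkamp-type deformation carried out relative to the boundary: using the negativity of the mass, I would produce a new asymptotically flat metric on $M$, still with $m_{\operatorname{ADM}}<0$, whose scalar curvature is strictly positive outside a compact set, whose boundary mean curvature is strictly positive outside a compact set, and which is conformally flat (ideally Euclidean) near infinity in $\mathbb{R}^n_+$. The half-space structure is used essentially; the existence of such a deformation parallels the closed case but must respect $\partial M$.

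Second, construct the free boundary Plateau solutions. For large $\sigma$ and $a\in[-a_0,a_0]$, let
\[\Gamma_{\sigma,a}=\{x\in\mathbb{R}^n_+: x_2^2+\cdots+x_{n-1}^2=\sigma^2,\; x_n=a\},\]
a half-sphere in the slice $\{x_n=a\}$ whose own boundary sits on $\partial\mathbb{R}^n_+$. Solve the corresponding free boundary Plateau problem in $M$, obtaining $\Sigma_{\sigma,a}$ whose boundary consists of $\Gamma_{\sigma,a}$ together with a curve on $\partial M$ along which $\Sigma_{\sigma,a}$ meets $\partial M$ orthogonally; existence and smoothness up to the free boundary follow from geometric measure theory in the regime $n\leq 7$. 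Thanks to the deformation, the coordinate half-planes $\{x_n=a\}$ serve as strict mean convex barriers, confining $\Sigma_{\sigma,a}$ to a slab $|x_n|\leq\Lambda$ for every $a\in[-a_0,a_0]$ and every large $\sigma$. Choosing $a(\sigma)\in(-a_0,a_0)$ that minimises the area over $a$ produces $\Sigma_\sigma$ which is critical with respect to vertical translations; a subsequential limit $\Sigma_\infty$ as $\sigma\to\infty$ is a complete, noncompact, strongly stable free boundary minimal hypersurface in $(M,g)$.

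Third, exploit stability. The free boundary second variation yields
\[\int_\Sigma(|A_\Sigma|^2+\operatorname{Ric}_g(\nu,\nu))\varphi^2+\int_{\partial\Sigma\cap\partial M}\Pi_{\partial M}(\nu,\nu)\varphi^2\leq\int_\Sigma|\nabla_\Sigma\varphi|^2\]
for all $\varphi\in C^\infty_c(\Sigma)$, where $\Pi_{\partial M}$ is the second fundamental form of $\partial M$ and $\nu$ is the unit normal to $\Sigma$ in $M$. Combining this with the Gauss equation and with $R_g\geq 0$ and $H_g\geq 0$ (strict outside a compact set after the deformation) forces $\Sigma_\infty$ itself to carry a conformal metric of nonnegative interior scalar curvature and nonnegative boundary geodesic curvature. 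In dimension $n=3$, the controlled asymptotics of $\Sigma_\infty$ near $\Gamma_{\sigma(\cdot),a(\cdot)}$ together with Gauss--Bonnet (or its Cohn--Vossen-type refinement, now with a boundary geodesic curvature contribution) contradict the strict positivity preserved by the deformation. For $3<n\leq 7$, one runs a free boundary dimension reduction: iteratively slice by a stable free boundary minimal hypersurface in the induced geometry, conformally adjusting after each step, until the problem reduces to the $n=3$ case.

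The main obstacle I expect is the interplay between slab confinement and the behaviour of the surfaces near the corner where $\partial M$ meets infinity: the half-planes $\{x_n=a\}$ must be genuinely mean convex right up to $\partial M$, and the free boundary regularity of $\Sigma_{\sigma,a}$ near that corner at finite radius must be controlled uniformly in $\sigma$ and $a$; similarly, the dimension reduction step must be verified to preserve the boundary curvature conditions needed at each stage. For the rigidity statement, if $m_{\operatorname{ADM}}=0$ but $(M,g)$ is not isometric to $(\mathbb{R}^n_+,\delta)$, a compactly supported perturbation that strictly increases $R_g$ or $H_g$ somewhere can be arranged to strictly decrease the mass, contradicting the inequality just proved. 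A strong maximum principle applied in the interior and on $\partial M$ along $\Sigma_\infty$ then upgrades the flatness infinitesimally available to a global isometry with $(\mathbb{R}^n_+,\delta)$.
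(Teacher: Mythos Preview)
Your proposal is correct and follows essentially the same route as the paper's main proof: deform to a conformally flat end with $R_g>0$ and $H_g>0$ (the paper invokes the density theorem of Almaraz--Barbosa--de Lima rather than a full Lohkamp reduction), solve the partially free boundary Plateau problem for the half-curves $\Gamma_{\sigma,a}$, use a divergence/mean-convexity argument on the slabs to pin $a(\sigma)\in(-a_0,a_0)$, pass to a strongly stable free boundary minimal limit, and finish with Gauss--Bonnet at $n=3$ and a conformal dimension reduction (producing on $\Sigma$ a scalar-flat, minimal-boundary, asymptotically flat metric of strictly negative mass) for $4\le n\le 7$. The corner regularity issue you flag is exactly the one the paper addresses, and it is handled the same way: the possible singularity at $\Lambda_{\sigma,a}$ disappears in the $\sigma\to\infty$ limit, and rigidity is deferred to the argument in \cite{almaraz-positive-2016}.
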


Their method is to perturb the metric, making the manifold $M$ conformally
flat at infinity and the mean curvature of $\partial M$ strictly positive,
therefore $\partial M$ serves a barrier for the area-minimizing hypersurface
to exist.

We provide a different approach which uses free boundary minimal hypersurfaces
instead.

We also study the geometry of $(\mathbb{T}^{n - 1} \times [0, 1])
\#M_0$ where $M_0$ is non-flat. We prove the following

\begin{theorem}
  \label{thm:non-existence}There does NOT exist a metric on $(\mathbb{T}^{n -
  1} \times [0, 1]) \#M_0$ with nonnegative scalar curvature and nonnegative
  mean curvature along the boundary where $3 \leqslant n \leqslant 7$.
\end{theorem}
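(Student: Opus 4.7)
The plan is to argue by contradiction and run a Schoen--Yau style dimension descent, now in the free boundary category to match the mean-convex boundary hypothesis. Suppose $(N,g)$ with $N=(\mathbb{T}^{n-1}\times[0,1])\#M_{0}$ satisfies $R_{g}\geqslant 0$ and $H_{g}\geqslant 0$. Let $\pi:N\to\mathbb{T}^{n-1}$ be the composition of collapsing the $M_{0}$-summand with the projection $\mathbb{T}^{n-1}\times[0,1]\to\mathbb{T}^{n-1}$, and let $[\gamma]\in H^{1}(\mathbb{T}^{n-1};\mathbb{Z})$ be a primitive class dual to an $S^{1}$ factor. Pulling back $\pi^{*}[\gamma]$ and applying Poincar\'e--Lefschetz duality produces a non-zero class in $H_{n-1}(N,\partial N;\mathbb{Z})$. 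Because $3\leqslant n\leqslant 7$, geometric measure theory with free boundary regularity yields a smooth, properly embedded hypersurface $\Sigma_{1}\subset N$ realizing the infimum of $(n-1)$-volume in this class; the condition $H_{g}\geqslant 0$ turns $\partial N$ into a barrier, so that $\Sigma_{1}$ meets $\partial N$ orthogonally as a free boundary minimal hypersurface.

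The second variation of area for free boundary variations of $\Sigma_{1}$ contains an interior term built from $|A|^{2}+\operatorname{Ric}_{g}(\nu,\nu)$ and a boundary term involving the second fundamental form $\Pi_{\partial N}$ of $\partial N$ in the direction $\nu$. Combining the resulting stability inequality with the Gauss equation, the bound $R_{g}\geqslant 0$, and the splitting $H_{g}=\operatorname{tr}_{\partial\Sigma_{1}}\Pi_{\partial N}+\Pi_{\partial N}(\nu,\nu)\geqslant 0$, one obtains by a conformal rescaling a metric $\bar g_{1}$ on $\Sigma_{1}$ with $R_{\bar g_{1}}\geqslant 0$ and with nonnegative mean curvature along $\partial\Sigma_{1}$. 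Moreover, $\Sigma_{1}$ still carries a non-trivial class Poincar\'e--Lefschetz dual to a remaining $S^{1}$-factor in $\mathbb{T}^{n-2}$, so the construction iterates. After $n-3$ further iterations we arrive at a two-dimensional free boundary minimal surface $(\Sigma_{n-2},\bar g_{n-2})$ with $K\geqslant 0$ and nonnegative geodesic curvature $\kappa$ along $\partial\Sigma_{n-2}$.

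Gauss--Bonnet applied to $(\Sigma_{n-2},\bar g_{n-2})$ forces $\chi(\Sigma_{n-2})\geqslant 0$; but topologically $\Sigma_{n-2}$ is an annulus with additional handles or connect-summands descended from $M_{0}$, so $\chi(\Sigma_{n-2})\leqslant 0$. Equality throughout gives $K\equiv 0$, $\kappa\equiv 0$, and propagating the equality cases of each stability inequality back up the descent shows that every $\Sigma_{k}$ is totally geodesic and flat and that $g$ is Ricci-flat in an open neighborhood of $\Sigma_{1}$. A unique-continuation / rigidity argument then spreads flatness across the $M_{0}$ summand, endowing $M_{0}$ with a flat metric and contradicting its non-flatness. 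The hard step will be the topological bookkeeping: one must ensure that at every stage the minimizer retains a class actually detecting the $M_{0}$-summand (rather than drifting off into a purely toric, and therefore automatically flat, region), so that the terminal Gauss--Bonnet rigidity propagates back to constrain $M_{0}$. This is the analogue of the usual difficulty in the closed case $\mathbb{T}^{n}\#M_{0}$ and is handled by selecting representatives of the relative class carefully and by discarding spherical components of the minimizer as they appear, which is legitimate because $n\leqslant 7$.
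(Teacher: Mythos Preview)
Your overall architecture---minimize in a nontrivial class of $H_{n-1}(N,\partial N;\mathbb Z)$, use free boundary stability together with the Gauss equation and the decomposition $H_{\partial M}=H_{\partial\Sigma}+A(\nu,\nu)$, conformally rescale, and descend to a surface where Gauss--Bonnet applies---is exactly the paper's first proof. The difference, and the gap, is in how the borderline case is handled.

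The paper does \emph{not} attempt any rigidity propagation at the end of the descent. Instead it eliminates the equality case \emph{before} the descent begins: a Bochner/Hodge--Morrey argument on harmonic $1$-forms (using that $\pi:N\to\mathbb T^{n-1}\times[0,1]$ has degree one) shows that $(N,g)$ with $M_0$ non-flat can never be Ricci flat with totally geodesic boundary. A first-eigenvalue conformal deformation then upgrades $R_g\geqslant 0$, $H_g\geqslant 0$ to $R_{\tilde g}>0$ strictly with $H_{\tilde g}\equiv 0$. From that point every stability inequality is strict, Lemma~\ref{lm:positive minimal} applies verbatim at each stage, and at the terminal surface one gets $\chi(\Sigma^2)>0$, which flatly contradicts $\chi(\Sigma^2)\leqslant 0$ (two boundary components).

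Your route keeps only $R\geqslant 0$, $H\geqslant 0$ throughout and then tries to squeeze the conclusion out of the equality case. The sentence ``a unique-continuation / rigidity argument then spreads flatness across the $M_0$ summand'' is precisely where the content is, and as written it is not a proof. Equality in the 2D stability step tells you $R=0$, $B=0$, $H_{\partial M}=0$ \emph{only along} $\Sigma^2$; turning this into a statement on all of $N$ requires either a delicate foliation/splitting argument at every level of the descent, or---more to the point---exactly the Bochner-type lemma the paper proves up front. The ``topological bookkeeping'' worry you flag (making sure the minimizers detect $M_0$) is a symptom of the same problem: once you have the strict inequality you never need to locate $M_0$ inside the minimizers at all. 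A small side remark: the sentence ``$H_g\geqslant 0$ turns $\partial N$ into a barrier, so that $\Sigma_1$ meets $\partial N$ orthogonally'' conflates two different mechanisms---orthogonal contact comes from the free boundary variational problem in the relative class, not from any barrier; the barrier role of strict mean convexity is used in the paper's \emph{second} proof, where one minimizes in $H_{n-1}(N;\mathbb Z)$ instead and the minimizer stays in the interior.
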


In fact, the only metric with nonnegative scalar curvature and nonnegative
mean curvature is flat which in turn will force $M_0$ to be flat. The
existence of positive scalar curvature metrics on $\mathbb{T}^n \#M_0$ were
studied most notably by works of Schoen, Yau {\cite{schoen-structure-1979}}
and Gromov, Lawson {\cite{gromov-positive-1983}}.
This non-existence result Theorem \ref{thm:non-existence} was essentially due to Gromov and Lawson {\cite{gromov-positive-1983}}. For the convenience of the reader, we include our sketch of their proof. Note that their proof used minimal surfaces techniques and we use instead free boundary minimal surfaces. 

Also, other results of the
latter require a spin assumption on the manifold $\mathbb{T}^n \#M_0$.
For Theorem \ref{thm:non-existence}, the corresponding
spin versions can be established via the analysis of Dirac spinors with
integrated Bochner formula (see Hijazi, Motiel and Zhang {\cite[Eq.
(2.3)]{hijazi-eigenvalues-2001}}). We focus here on the non-spin case, and the
dimension assumption $3 \leqslant n \leqslant 7$ is a technical one.

Recently, Schoen and Yau {\cite{schoen-positive-2017}} develop a minimal
slicing theory and use it to settle the non-spin higher dimensional positive
mass theorem and also established a non-existence result of positive scalar
curvature metrics on $\mathbb{T}^n \#M_0$ with $n$ greater than seven. We
intend to generalize the theory to the boundary setting in future works.

The article is organized as follows:

In Section \ref{fbmh}, we record some basics of free boundary minimal
hypersurfaces and that of conformal changes. In Section \ref{schoen approach},
we present another approach which resembles more formally with Schoen and
Yau's original approach by replacing minimal hypersurface with free boundary
minimal hypersurface instead. We follow closely to Schoen's article
{\cite{schoen-variational-1989}}. \ In Section \ref{geometry}, we study the
relationship of the geometry of $(\mathbb{T}^{n - 1} \times [0, 1]) \#M_0$ and
the positive mass theorem. In the appendix, we give details of some of the
computations.

{\bfseries{Acknowledgements.}} This work is part of the author's PhD thesis at the Chinese
University of HK. He would like to thank his PhD advisor Prof. Martin Man-chun Li for suggesting this problem
and many helpful discussions, and also for continuous encouragement and support.

\section{free boundary minimal hypersurfaces and conformal
geometry}\label{fbmh}

In this section, $(M^n, g)$ is a smooth manifold of dimension $n$ with
nonempty boundary $\partial M$ and $\Sigma^{n - 1}$ is an immersed
hypersurface whose boundary lies in $\partial M$. Let $\Sigma$ be a free
boundary minimal hypersurface which is a critical point of the volume
functional $V (\Sigma)$ among all surfaces whose boundary lies in $\partial
M$. We compute the first and second variation of the functional $V (\Sigma)$
with respect to variational vector field $X$ whose restriction on $\partial
\Sigma$ is tangential to $\partial M$. This computation also fills some
calculational details missing in {\cite{schoen-variational-1989}}.

We adopt the following notations.

{\bfseries{Notations.}} We use $(\Sigma, \partial \Sigma) \looparrowright (M,
\partial M)$ to denote that $\Sigma$ is a hypersurface in $M$ with boundary
lying on $\partial M$. $B (X, Y) := \langle \nabla_X \nu, Y \rangle$ is the
second fundamental form of $\Sigma$ in $M$ where $\nu$ is a fixed unit normal
to $\Sigma$ in $M$ and $X, Y$ are tangent to $\Sigma$ when restricted to
$\Sigma$. Let $A (Y, Z) := \langle \nabla_Y \eta, Z \rangle$ where $\eta$ is
the outward normal of $\partial M$ in $M$ and $Y, Z$ are tangent to $\partial
M$ in $M$. When $\eta$ is also normal to $\partial \Sigma$ in $\Sigma$, the
second fundamental form $A$ evaluated on $T \partial \Sigma \times T \partial
\Sigma$ can be expressed as the second fundamental form of $\partial \Sigma$
in $\Sigma$ as well.

\subsection{Basics of free boundary minimal hypersurfaces}

We have the following definition

\begin{definition}
  (Free boundary minimal hypersurface) $(\Sigma, \partial \Sigma)
  \looparrowright (M, \partial M)$ is said to be a free boundary minimal
  hypersurface if the first variation of the volume functional $V (\Sigma)$
  vanishes along any vector field $X$ which only has components tangential to
  $\partial M$ along $\partial \Sigma$.
\end{definition}

It is well known that the first variation of any hypersurface $\Sigma$ is
given by
\[ \delta \Sigma (X) := \delta V (X) |_{\Sigma} = \int_{\Sigma}
   \mathrm{div}_{\Sigma} X = \int_{\Sigma} H \langle X, \nu \rangle +
   \int_{\partial \Sigma} \langle X, \eta \rangle, \]
here $\nu$ is a fixed normal of $\Sigma$, $H = \mathrm{div}_{\Sigma} \nu$ is
the mean curvature of $\Sigma$ and $\eta$ is the outward normal of $\partial
\Sigma$ in $\Sigma$. We see immediately from the definition of a free boundary
minimal hypersurface, $\Sigma$ is free boundary minimal if and only if
\[ H \equiv 0 \text{ on } \Sigma \text{ and } \eta \perp \partial M \text{
   along } \partial \Sigma . \]
We record the second variation of a free boundary minimal hypersurface in the
following theorem and postpone the calculation to the appendix.

\begin{theorem}
  \label{2nd variation}Given a free boundary minimal hypersurface $\Sigma$ in
  $M$, let $\nu$ be a normal to $\Sigma$ in $M$ and $X$ be a variational
  vector field. $X$ admits the decomposition that $X = \varphi \nu + T$ where
  $T$ is tangent to $\Sigma$ and $\nu$ is normal to $\Sigma$. Let the normal
  component of $\nabla_X X$ be $\phi \nu$ and tangent component be $\hat{Z}$.
  Assume that $X$ is tangent to $\partial M$ along $\partial \Sigma$ then the
  second variation of volume is
  \[ \delta^2 \Sigma (X) := \delta^2 V (X) |_{\Sigma} = \int_{\Sigma} F_X d
     \mathrm{vol}_{\Sigma} \]
  where the density is given by
  \begin{align}
    F_X = & - \varphi^2 \mathrm{Ric} (\nu) - \varphi^2 |B|^2 + | \nabla
    \varphi |^2\\
    & + \mathrm{div} (T \mathrm{div} T - \nabla_T T) + \mathrm{div} \hat{Z} -
    2 (\varphi B_{ij} T_i)_{; j} .
  \end{align}
\end{theorem}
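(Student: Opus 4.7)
The plan is to differentiate the area functional twice along a one-parameter variation $F:\Sigma\times(-\epsilon,\epsilon)\to M$ with $F_0=\mathrm{id}_\Sigma$, $\partial_tF|_{t=0}=X$, and $F_t(\partial\Sigma)\subset\partial M$; such a variation exists because $X$ is tangent to $\partial M$ along $\partial\Sigma$. Writing $|\Sigma_t|=\int_\Sigma J(t)\,d\mathrm{vol}_\Sigma$ for the Jacobian and using $\frac{d}{dt}J(t)=(\mathrm{div}_{\Sigma_t}X_t)\,J(t)$, one obtains
\begin{equation}
\delta^2\Sigma(X)=\int_\Sigma\Big[(\mathrm{div}_\Sigma X)^2+\frac{d}{dt}\Big|_{t=0}\mathrm{div}_{\Sigma_t}X_t\Big]\,d\mathrm{vol}_\Sigma,
\end{equation}
so the whole content reduces to evaluating the time derivative inside the bracket.

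I would then pick a local orthonormal frame $\{e_i(t)\}$ of $\Sigma_t$ with $\{e_i(0)\}$ orthonormal on $\Sigma$, and expand $\mathrm{div}_{\Sigma_t}X_t=g^{ij}(t)\langle\nabla_{e_i(t)}X_t,e_j(t)\rangle$. Differentiating at $t=0$ produces three pieces: the inverse-metric variation $\dot g^{ij}(0)$, which combines with $(\mathrm{div}_\Sigma X)^2$ to isolate the Dirichlet integrand; the curvature term coming from the Ricci identity $\nabla_{\partial_t}\nabla_{e_i}X_t=\nabla_{e_i}\nabla_{\partial_t}X_t+R(\partial_t,e_i)X_t$, whose trace produces $\sum_iR(X,e_i,X,e_i)$; and the residual $\nabla_{\partial_t}X_t|_{t=0}=\nabla_XX=\phi\nu+\hat Z$, whose divergence equals $\phi H+\mathrm{div}\,\hat Z=\mathrm{div}\,\hat Z$ by minimality, accounting for the $\mathrm{div}\,\hat Z$ summand in $F_X$.

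Substituting $X=\varphi\nu+T$, the resulting expression splits into a normal block, a tangential block, and cross terms. The normal block collapses into the classical Jacobi density $|\nabla\varphi|^2-\varphi^2\,\mathrm{Ric}(\nu)-\varphi^2|B|^2$: the Ricci term comes from $\sum_iR(\nu,e_i,\nu,e_i)=\mathrm{Ric}(\nu)$ (using $R(\nu,\nu,\cdot,\cdot)=0$), the $|B|^2$ term from the expansion $\nabla_{e_i}(\varphi\nu)=(e_i\varphi)\nu+\varphi B_{ij}e_j$, and the Dirichlet term from the cancellation between $(\mathrm{div}_\Sigma X)^2$ and the full tangential gradient norm. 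The tangential block, via the algebraic identity
\begin{equation}
\mathrm{div}_\Sigma(T\,\mathrm{div}_\Sigma T-\nabla_TT)=(\mathrm{div}_\Sigma T)^2+T(\mathrm{div}_\Sigma T)-\mathrm{div}_\Sigma(\nabla_TT),
\end{equation}
collapses into the divergence $\mathrm{div}(T\,\mathrm{div}\,T-\nabla_TT)$, the remaining non-divergence pieces cancelling by minimality and the Codazzi equation. Finally the cross terms, generated by the Codazzi-type trace $\sum_iR(\nu,e_i,T,e_i)=(B_{ij}T_i)_{;j}-T(H)=(B_{ij}T_i)_{;j}$ together with lower-order pieces involving $\nabla\varphi$, combine into the total divergence $-2(\varphi B_{ij}T_i)_{;j}$.

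The main obstacle is the careful bookkeeping required to verify that, after all substitutions and applications of the Ricci and Codazzi identities, only the four blocks listed in $F_X$ survive as a pointwise density. The free boundary condition does not enter the bulk formula itself; it guarantees only that the variation $F_t$ is admissible, and becomes essential in later applications where one integrates $F_X$ over $\Sigma$ and pushes the divergence terms to $\partial\Sigma$ via Stokes, recovering the boundary contribution involving the second fundamental form $A$ of $\partial M$.
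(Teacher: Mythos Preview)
Your proposal is correct and follows essentially the same route as the paper: differentiate the volume density twice using $\partial_t J=(\mathrm{div}_{\Sigma_t}X_t)J$, invoke the Ricci identity to produce the curvature term $\sum_i R(X,e_i,X,e_i)$ and the acceleration $\nabla_X X=\hat Z+\phi\nu$, then decompose $X=\varphi\nu+T$ and use minimality together with the Codazzi equation to repackage the tangential and cross pieces as the divergences $\mathrm{div}(T\,\mathrm{div}\,T-\nabla_T T)$ and $-2(\varphi B_{ij}T_i)_{;j}$. The paper works in normal coordinates rather than a moving orthonormal frame and carries out the term-by-term bookkeeping explicitly, but the structure and key identities are identical to what you outline.
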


If the variational vector field $X$ is normal to $\Sigma$, then we can write
the second variation in a simpler form
\[ \delta^2 \Sigma (f \nu) = \int_{\Sigma} | \nabla f|^2 - (\mathrm{Ric} (\nu)
   + |B|^2) f^2 - \int_{\partial \Sigma} f^2 A (\nu, \nu) \]
where $f \in C^{\infty}_c (\Sigma)$.

\begin{definition}
  We say that $\Sigma$ is a stable free boundary minimal surface if for any $f
  \in C^{\infty}_c (\Sigma)$, the second variation $\delta^2 \Sigma (f \nu)
  \geqslant 0$. The inequality $\delta^2 \Sigma (f \nu) \geqslant 0$ is called
  the stability inequality.
\end{definition}

We write down the stability inequality for free boundary minimal hypersurfaces
in full,
\begin{equation}
  \delta^2 \Sigma (f \nu) = \int_{\Sigma} (- f^2 \mathrm{Ric} (\nu) - f^2
  |B|^2 + | \nabla f|^2) - \int_{\partial \Sigma} f^2 A (\nu, \nu) \geqslant
  0. \label{stability}
\end{equation}
With the Gauss-Codazzi equation,
\[ \mathrm{Ric} (\nu) + |B|^2 = \frac{1}{2} R_M - \frac{1}{2} R_{\Sigma} +
   \frac{1}{2} |B|^2 . \]
This is the fundamental observation made by Schoen and Yau
{\cite{schoen1979}}. Decomposition along $\partial \Sigma$ of the mean
curvature $H_{\partial M}$ gives that
\begin{equation}
  H_{\partial M} = \Sigma_{j = 1}^{n - 2}  (\nabla_{e_j} \eta, e_j) + A (\nu,
  \nu) = H_{\partial \Sigma} + A (\nu, \nu) \label{decomposition of mean
  curvature}
\end{equation}
where the orthonormal frame $e_j$ is tangent to $\partial \Sigma$, $e_{n - 1}
= \eta$ and $e_n = \nu$. We insert these identities back to
{\eqref{stability}} and get
\begin{equation}
  \int_{\Sigma} \left[ | \nabla f|^2 - f^2 (\frac{1}{2} R_M - \frac{1}{2}
  R_{\Sigma} + \frac{1}{2} |B|^2) \right] - \int_{\partial \Sigma} f^2
  (H_{\partial M} - H_{\partial \Sigma}) \geqslant 0 \label{stability2} .
\end{equation}
A rewrite of this inequality assuming that $R_M > 0$ and $H_{\partial M}
\geqslant 0$,
\begin{equation}
  \begin{array}{ll}
    \int_{\Sigma} | \nabla f|^2 + \frac{1}{2} f^2 R_{\Sigma} + \int_{\partial
    \Sigma} f^2 H_{\partial \Sigma} & > 0
  \end{array} \text{for all } 0 \neq f \in C^{\infty} (\Sigma)
  \label{stability-use}
\end{equation}
using $R_M$ is strictly positive everywhere is sufficient for our purpose.

\subsection{Conformal changes}

Given any manifold $(M^n, g)$, take any $u > 0$ on $M$, the conformal changed
metric $\hat{g} = u^{\frac{4}{n - 2}} g$ gives a law for the change of the
scalar curvature and the mean curvature of the boundary.

Denote $c_n = \frac{n - 2}{4 (n - 1)}$. We define the {\itshape{conformal
Laplacian}} by
\[ L = c_n R_g - \Delta_g u \quad \text{ in } M, \]
and the scalar curvature under $\hat{g}$ is given by
\[ R_{\hat{g}} = c_n^{- 1} u^{- \frac{n + 2}{n - 2}} (c_n R_g - \Delta_g u) =
   c_n^{- 1} u^{- \frac{n + 2}{n - 2}} L u. \]
We define also an operator acting along the boundary
\[ B = \partial_{\nu} + 2 c_n H_g \quad \text{on } \partial M, \]
and the mean curvature $H_{\hat{g}}$ of the boundary $\partial M$ under
$\hat{g}$ is given by
\[ H_{\tilde{g}} = \frac{1}{2} c_n^{- 1} u^{- \frac{2}{n - 2}} (2 c_n H_g +
   \partial_{\nu} u) = \frac{1}{2} c_n^{- 1} u^{- \frac{2}{n - 2}} B u \quad
   \text{along } \partial M. \]
where $\partial_{\nu}$ denotes the derivative along the outward unit normal
$\nu$ to $\partial M$ in $M$. We write often $L (M, g) = L$ and $B (M, g) = B$
to avoid confusion.

We derive a simple consequence of {\eqref{stability-use}} encoded in the
following lemma,

\begin{lemma}
  \label{lm:positive minimal}Given any compact manifold $(M^n, g)$ with
  boundary $\partial M$, suppose that
  \[ \int_M | \nabla f|^2 + \frac{1}{2} R_M f^2 + \int_{\partial M} f^2
     H_{\partial M} > 0 \]
  for all $0 \neq f \in C^{\infty} (M)$. Then $M$ admits a positive scalar
  curvature metric $\hat{g}$, and under this metric the boundary $\partial M$
  is minimal.
\end{lemma}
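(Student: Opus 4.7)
The natural approach is to look for $\hat g$ of the form $\hat g = u^{4/(n-2)} g$ with $u > 0$, since by the conformal formulas recalled above,
\[
R_{\hat g} = c_n^{-1} u^{-(n+2)/(n-2)} L u, \qquad H_{\hat g} = \tfrac{1}{2} c_n^{-1} u^{-2/(n-2)} B u,
\]
so $R_{\hat g} > 0$ on $M$ together with $H_{\hat g} = 0$ on $\partial M$ is equivalent to $L u > 0$ and $B u = 0$. The plan is to take $u$ to be the first eigenfunction of the Robin-type problem $L u = \lambda u$ in $M$ with boundary condition $B u = 0$ on $\partial M$, and to check $\lambda_1 > 0$.

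First I would set up the variational problem via the quadratic form
\[
Q(f) = \int_M \bigl( |\nabla f|^2 + c_n R_g f^2 \bigr) + 2 c_n \int_{\partial M} H_g f^2
\]
on $H^1(M)$, whose natural boundary condition after integration by parts is precisely $B u = 0$. Standard compactness (Rellich--Kondrachov and continuity of the trace) shows that $\lambda_1 := \inf_{f \not\equiv 0} Q(f) / \|f\|_{L^2}^2$ is attained by a minimizer $u$, which then satisfies $L u = \lambda_1 u$ in $M$, $B u = 0$ on $\partial M$, and is smooth by elliptic regularity for Robin problems. Strict positivity of $u$ on $\overline{M}$ follows by the usual argument: $|u|$ is still a minimizer, hence a non-negative solution, so the strong maximum principle forces $|u| > 0$ in the interior; and if $u$ vanished at some $x_0 \in \partial M$, the Hopf boundary point lemma would give $\partial_\nu u(x_0) < 0$, contradicting $(\partial_\nu u + 2 c_n H_g u)(x_0) = 0$.

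The crux of the argument is showing $\lambda_1 > 0$, which is exactly where the hypothesis is used. The key elementary observation is $2 c_n = (n-2)/(2(n-1)) \leq 1$ for all $n \geq 2$; with this, for every $f \not\equiv 0$ one has
\[
Q(f) - 2 c_n \left( \int_M |\nabla f|^2 + \tfrac{1}{2} R_g f^2 + \int_{\partial M} H_g f^2 \right) = (1 - 2 c_n) \int_M |\nabla f|^2 \geq 0,
\]
so $Q(f) \geq 2 c_n \cdot (\text{hypothesis form}) > 0$, giving $\lambda_1 > 0$. Combining $u > 0$ with $\lambda_1 > 0$ in the conformal formulas yields $R_{\hat g} = c_n^{-1} \lambda_1 u^{-4/(n-2)} > 0$ and $H_{\hat g} = 0$, as desired.

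I expect the only real point of friction to be the positivity of $u$ up to the boundary, which requires the Hopf lemma rather than just the interior maximum principle, and verifying that the natural boundary condition of $Q$ really is $B u = 0$ with the exact constants matching those in the definitions of $L$ and $B$. The positivity of $\lambda_1$, which is the true content of the lemma, is essentially a single line once the correct quadratic form is identified.
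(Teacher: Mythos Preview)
Your proposal is correct and follows essentially the same approach as the paper: solve the Robin eigenvalue problem $Lu=\lambda u$, $Bu=0$, use the Rayleigh quotient together with $2c_n<1$ to bound $Q(f)\geq 2c_n\cdot(\text{hypothesis form})>0$, and then read off $R_{\hat g}>0$, $H_{\hat g}=0$ from the conformal formulas. The only difference is that you spell out the existence and boundary positivity of the first eigenfunction (via Rellich--Kondrachov, the strong maximum principle, and the Hopf lemma), whereas the paper simply asserts that the eigenvalue problem ``admits a positive solution $u>0$.''
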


\begin{proof}
  The eigenvalue problem
  \[ \left\{\begin{array}{ll}
       L u = \lambda u & \quad \text{in } M,\\
       B u = 0 & \quad \text{ on } \partial M
     \end{array}\right. \]
  admits a positive solution $u > 0$.

  Using Rayleigh quotient and that $2 c_n < 1$,
  \begin{align}
    \lambda & = \frac{\int_M (| \nabla u |^2 + c_n R_M u^2) + 2 c_n
    \int_{\partial M} H_{\partial M} u^2}{\int_M u^2}\\
    & \geqslant 2 c_n \left[ \frac{\int_M \left( | \nabla u |^2 + \frac{1}{2}
    R_M u^2 \right) + \int_{\partial M} H_{\partial M} u^2}{\int_M u^2}
    \right] > 0.
  \end{align}
  Let $\tilde{g} = u^{\frac{4}{n - 2}} g$, then
  \begin{align*}
    R_{\tilde{g}} & = c_n^{- 1} u^{- \frac{n + 2}{n - 2}} (c_n R_g - \Delta_g
    u)\\
    & = c_n^{- 1} u^{- \frac{n + 2}{n - 2}} \lambda u = \lambda c_n^{- 1}
    u^{- \frac{4}{n - 2}} > 0
  \end{align*}
  and
  \[ H_{\tilde{g}} = u^{- \frac{2}{n - 2}} (H_g + \frac{1}{2} c_n^{- 1}
     \partial_{\nu} u) = 0 \text{ along } \partial M \]
  give that the metric $\tilde{g}$ is the desired metric.
\end{proof}

Similarly, we have

\begin{lemma}
  \label{lm:flat minimal}Given any compact manifold $(M^n, g)$ with boundary
  $\partial M$, suppose that
  \[ \int_M | \nabla f|^2 + \frac{1}{2} R_M f^2 + \int_{\partial M} f^2
     H_{\partial M} > 0 \]
  for all $0 \neq f \in C^{\infty} (M)$. Then $M$ admits a scalar-flat metric
  $\hat{g}$, under this metric the boundary $\partial M$ is strictly mean
  convex.
\end{lemma}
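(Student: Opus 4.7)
The plan is to mimic the proof of Lemma~\ref{lm:positive minimal}, but with the roles of the bulk and boundary reversed: instead of imposing the homogeneous boundary condition $Bu=0$ and letting the bulk eigenvalue supply positivity of $R_{\hat g}$, I will impose the homogeneous interior equation $Lu=0$ and let a Steklov-type boundary eigenvalue produce strictly positive mean curvature. Concretely, I would study the mixed eigenvalue problem
\begin{equation}
\begin{cases}
L u = 0 & \text{in } M, \\
B u = \mu u & \text{on } \partial M,
\end{cases}
\end{equation}
that is, $-\Delta_g u + c_n R_g u = 0$ in $M$ together with $\partial_\nu u + 2 c_n H_g u = \mu u$ on $\partial M$. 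A standard Lax--Milgram/compactness argument on $H^1(M)$ (using compactness of the trace operator into $L^2(\partial M)$) produces a first eigenvalue $\mu_1$ with Rayleigh characterization
\[
\mu_1 = \inf_{u \not\equiv 0 \text{ on } \partial M} \frac{\int_M \bigl(|\nabla u|^2 + c_n R_g u^2\bigr) + 2 c_n \int_{\partial M} H_g u^2}{\int_{\partial M} u^2},
\]
and a minimizer $u_1$ that may be taken nonnegative by replacing $u$ with $|u|$.

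Next I would verify $\mu_1 > 0$ using the hypothesis. Since $2c_n = \tfrac{n-2}{2(n-1)} \leq 1$ for $n \geq 2$, the pointwise inequality
\[
|\nabla u|^2 + c_n R_g u^2 \geq 2 c_n \bigl(|\nabla u|^2 + \tfrac{1}{2} R_g u^2\bigr)
\]
holds, so the numerator of the Rayleigh quotient is at least $2c_n$ times the quadratic form $\int_M(|\nabla u|^2 + \tfrac{1}{2} R_g u^2) + \int_{\partial M} H_g u^2$ appearing in the hypothesis, which is strictly positive on any nonzero test function. Hence $\mu_1>0$.

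Having obtained $u_1 \geq 0$ and $\mu_1>0$, I would upgrade $u_1$ to a strictly positive function on $\bar M$. In the interior, $u_1$ solves the linear elliptic equation $-\Delta u_1 = -c_n R_g u_1$, so by the strong maximum principle either $u_1\equiv 0$ or $u_1 > 0$ in the interior. If $u_1$ then vanished at some $p\in\partial M$, the Hopf boundary-point lemma would give $\partial_\nu u_1(p)<0$; but the boundary condition forces $\partial_\nu u_1(p) = (\mu_1 - 2c_n H_g(p))u_1(p) = 0$, a contradiction. Hence $u_1 > 0$ on all of $\bar M$.

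Finally I would set $\hat g = u_1^{4/(n-2)} g$ and read off from the conformal change formulas that
\[
R_{\hat g} = c_n^{-1} u_1^{-(n+2)/(n-2)} L u_1 = 0, \qquad H_{\hat g} = \tfrac{1}{2} c_n^{-1} u_1^{-2/(n-2)} B u_1 = \tfrac{1}{2} c_n^{-1} \mu_1 \, u_1^{(n-4)/(n-2)} > 0,
\]
giving the desired scalar-flat metric with strictly mean convex boundary. The only real technical point is the positivity of $u_1$, handled above via the strong maximum principle and Hopf lemma; the rest of the argument is a verbatim analogue of Lemma~\ref{lm:positive minimal}.
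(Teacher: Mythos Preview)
Your proposal is correct and follows exactly the route the paper indicates: the paper's proof consists solely of the remark that one should replace the eigenvalue problem of Lemma~\ref{lm:positive minimal} by the Steklov-type problem $L\phi=0$ in $M$, $B\phi=\lambda\phi$ on $\partial M$, and then omits the details. You have supplied precisely those details---the Rayleigh characterization, the comparison $2c_n<1$ to invoke the hypothesis, the strong maximum principle and Hopf lemma for strict positivity of the first eigenfunction, and the conformal change formulas---all of which are the natural steps and are carried out correctly.
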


\begin{proof}
  The proof is similar to the previous lemma, except that we consider the
  following Steklov-type eigenvalue problem instead:
  \begin{equation}
    L \phi = 0 \quad \text{in } M, \hspace{2.0em} B \phi = \lambda \phi \quad
    \text{on } \partial M
  \end{equation}
  We omit the details.
\end{proof}

\section{An alternate proof of Theorem \ref{positive mass
theorem}}\label{schoen approach}

In this section, we provide another proof of the positive mass theorem
(Theorem \ref{positive mass theorem}) using free boundary minimal
hypersurfaces.

\subsection{Step 1: Existence of area-minimizing hypersurface with free
boundary}

We assume on the contrary that $m_g < 0$. By the density theorem
{\cite[Proposition 4.1]{almaraz-positive-2016}}, we can assume that $g =
h^{\frac{4}{n - 2}} \delta$, $R_g > 0$ on $M$ and $H_g > 0$ on $\partial M$
where $h (x) = 1 + C (n) m_g | x |^{2 - n} + O (| x |^{1 - n})$, where $C (n)$
is a constant depending only on the dimension. Consider the vector field $\eta
= h^{- \frac{2}{n - 2}} \partial_n$. We compute the divergence of $\eta$ with
respect to $g$
\[ \mathrm{\ensuremath{\operatorname{div}}}_g \eta = - 2 (n - 1) C (n) m_g
   \frac{x^n}{| x |^n} + O (| x |^{- n}) \]
In particular we see that $\mathrm{\ensuremath{\operatorname{div}}_g \eta > 0}
$ for $x^n \geqslant a_0$ and $\mathrm{\ensuremath{\operatorname{div}}_g (-
\eta)} > 0$ for $x^n \leqslant - a_0$ for some constant $a_0$. Let $\sigma$ be
a large real number, let
\[ \Gamma_{\sigma, a} = \{ x = (\bar{x}, x^n) : x^n = a, | \bar{x} | = \sigma,
   x^1 \geqslant 0 \} \]
and
\[ C_{\sigma} = \left\{ x = (\bar{x}, x^n) : | \bar{x} | = \sigma, x^1
   \geqslant 0 \text{ or } x^1 = 0 | \bar{x} | \leqslant \sigma \right\} . \]
The half cylinder $C_{\sigma}$ with $\partial M$ bounds to the interior a
region $\Omega_{\sigma}$ in $M$. We solve a Plateau problem within the class
of hypersurfaces with partially free boundary on $\partial M$ and fixed
boundary $\Gamma_{\sigma, a}$ and obtain an $(n - 1)$-dimensional
hypersurfaces $\Sigma_{\sigma, a}$ with least area among all such
hypersurfaces. Where the free boundary and fixed boundary meet is called the
{\itshape{corner of the hypersurface}}. In our situation, the corner is the
following set
\[ \Lambda_{\sigma, a} = \{ x = (\bar{x}, x^n) : | \bar{x} | = \sigma, x^1 =
   0, x^n = a \} . \]
When solving a Plateau problem, regularity issues will often arise. The
requirement that the dimension $3 \leqslant n < 8$ is one of them (see
{\cite{federer-geometric-1996}}), and in particular regularity is a problem at
the corner. We now list standard known facts about the regularity of
$\Sigma_{\sigma, a}$.

The interior regularity of $\Sigma_{\sigma, a}$ is just classical geometric
measure theory (see {\cite{federer-geometric-1996}}). The regularity at the
free boundary of the boundary away from the corner $\Lambda_{\sigma, a}$ is
shown by Gruter {\cite{gruter-optimal-1987,gruter-regularity-2009}} and the
regularity near $\Gamma_{\sigma, a} \sim \Lambda_{\sigma, a}$ follows from the
work of Hardt and Simon {\cite{hardt-boundary-1979}}. Although Gruter
{\cite{gruter-remark-1990}} claimed some regularity results at the corner, but
we have not seen those get published.

In conclusion, regularity can only be an issue at the corners. However, we
are able to bypass this when taking limits.

\begin{figure}[h]
  \includegraphics{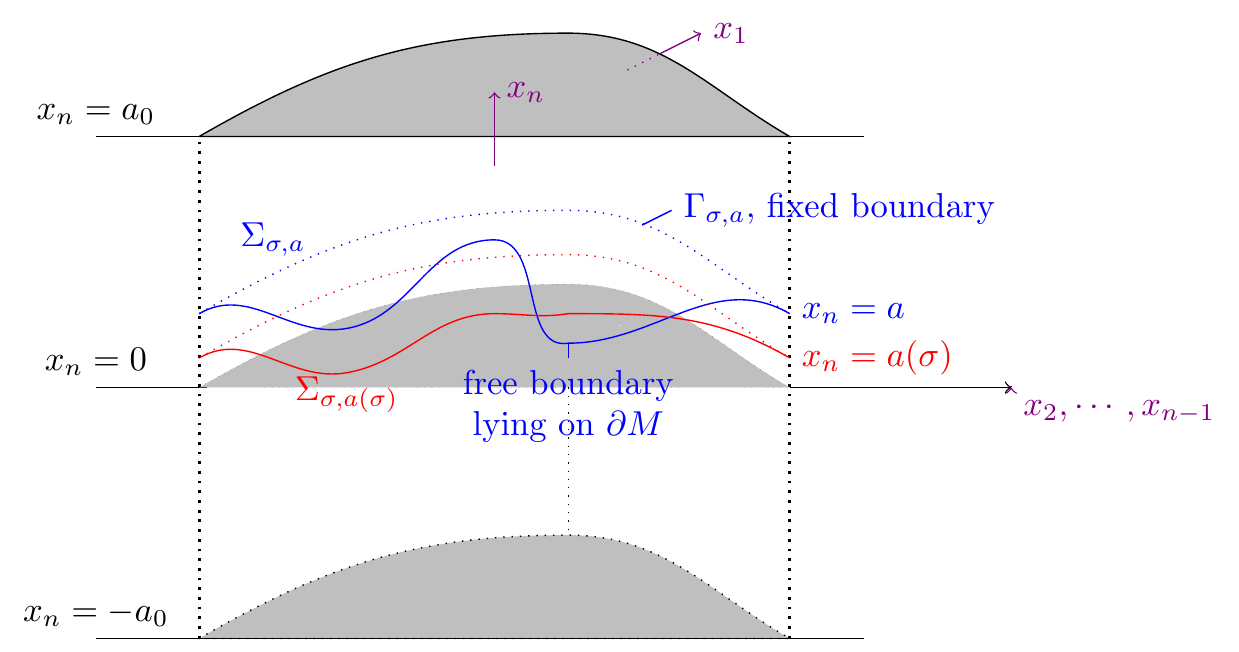}
  \caption{\label{fb-choice}Solving a Plateau problem with a partially
  free boundary lying on $\partial M$ and fixed boundary $\Gamma_{\sigma,
  a}$.}
\end{figure}
For any large $\sigma > 0$, let
\[ V (\sigma) = \min \{ \ensuremath{\operatorname{Vol}} (\Sigma_{\sigma, a}) :
   a \in [- a_0, a_0] \} . \]
First, we note that

{\bfseries{Claim 1:}} The function $a \mapsto \mathcal{H}^{n - 1}
(\Sigma_{\sigma, a})$ is continuous.

To prove this, take any two numbers $a_1, a_2 \in [- a_0, a_0]$, we can
further assume that $a_1 < a_2$. $\Sigma_{\sigma, a_1}$ is area-minimizing
with free boundary on $\partial M$ and fixed boundary $\Gamma_{\sigma, a'}$.
The union of $\Sigma_{\sigma, a_2}$ and $\cup_{a \in [a_1, a_2]}
\Gamma_{\sigma, a}$ serves as a comparison surface (which is only piecewise
smooth) for $\Sigma_{\sigma, a_1}$. By volume comparison,
\[ \mathcal{H}^{n - 1} (\Sigma_{\sigma, a_1}) \leqslant \mathcal{H}^{n - 1}
   (\Sigma_{\sigma, a_2}) +\mathcal{H}^{n - 1} (\cup_{a \in [a_1, a_2]}
   \Gamma_{\sigma, a}), \]
where the last term will be bounded by $C (a_2 - a_1) \sigma^{n - 2}$ and the
constant $C$ depend only on the dimension $n$ and the decay properties of the
metric $g$. Switching roles of $\Sigma_{\sigma, a_1}$ and $\Sigma_{\sigma,
a_2}$, we get
\[ |\mathcal{H}^{n - 1} (\Sigma_{\sigma, a_1}) -\mathcal{H}^{n - 1}
   (\Sigma_{\sigma, a_2}) | \leqslant C |a_2 - a_1 | \sigma^{n - 2}, \]
and hence finish the proof of the {\bfseries{Claim 1}}.

Hence, the $V (\sigma)$ is attained by some hypersurface $\Sigma_{\sigma, a}$
where $a = a (\sigma) \in [- a_0, a_0]$. Also, we claim further that

{\bfseries{Claim 2}}. There exists $a = a (\sigma) \in (- a_0, a_0)$ such that
$\ensuremath{\operatorname{Vol}} (\Sigma_{\sigma, a}) = V (\sigma)$.

To show that $a (\sigma) < a_0$, let $\Omega_{\sigma, a}$ be the region lying
below $\Sigma_{\sigma, a}$ and
\[ U_{\sigma, a} = \{ (\bar{x}, x^n) \in \Omega_{\sigma, a} : x^n > a_0 -
   \delta \} \]
where $\delta$ is chosen small such that
$\mathrm{\ensuremath{\operatorname{div}}_g \eta > 0} $ for $x^n > a_0 -
\delta$. We show by contradiction that $U_{\sigma, a} = \emptyset$. We have by
the divergence theorem,
\begin{eqnarray*}
  0 < \int_{U_{\sigma, a}} \ensuremath{\operatorname{div}}_g \eta & = &
  \int_{\Sigma_{\sigma, a} \cap \{ x^n \geqslant a_0 - \delta \}} \langle
  \eta, \nu \rangle + \int_{\Omega_{\sigma, a} \cap \{ x^n = a_0 - \delta \}}
  \langle \eta, - \eta \rangle\\
  & = & \int_{\Sigma_{\sigma, a} \cap \{ x^n \geqslant a_0 - \delta \}}
  \langle \eta, \nu \rangle -\ensuremath{\operatorname{Vol}} (\Omega_{\sigma,
  a} \cap \{ x^n = a_0 - \delta \})
\end{eqnarray*}
where on the right hand side integration on other pieces vanish because $\eta$
is tangent to $C_{\sigma}$. By Cauchy-Schwarz inequality we arrive
\[ \ensuremath{\operatorname{Vol}} (\Omega_{\sigma, a} \cap \{ x^n = a_0 -
   \delta \}) <\ensuremath{\operatorname{Vol}} (\Sigma_{\sigma, a} \cap \{ x^n
   \geqslant a_0 - \delta \}) . \]
In the case when $a < a_0 - \delta$, we can project pieces of $\Sigma_{\sigma,
a}$ above $\{x_n = a_0 - \delta\}$ down to $\{ x^n = a_0 - \delta \}$ and get
a smaller area. This contradicts the minimality of $\Sigma_{\sigma, a}$ among
all hypersurfaces with $\Gamma_{\sigma, a}$ as boundary in $\Omega_{\sigma}$.
When $a > a_0 - \delta$ we can do the same projection and we get a surface
with fixed boundary $\Gamma_{\sigma, a_0 - \delta}$ and its area is strictly
less that $V (\sigma)$, and this contradicts the choice of $a$. Therefore
$U_{\sigma, a} = \emptyset$. We obtain similarly the lower bound for $a
(\sigma)$. In conclusion, $a (\sigma) \in (- a_0, a_0)$.

\subsection{Step 2: Strong stability and limiting behavior as $\sigma \to
\infty$}

Let $\Sigma_{\sigma} := \Sigma_{\sigma, a (\sigma)}$ be one of the
hypersurfaces which realizes the minimum volume $V (\sigma)$. Let $X_1$ be a
fixed vector field on $M$ which is equal to $\partial_n$ outside a compact
set. Let $X_0$ be a vector field of compact support, and let $X = X_0 + \alpha
X_1$ where $\alpha$ is a real number. The vector field $X$ generates a
one-parameter group of diffeomorphism $F_t$. $F_t$ gives a variation of
$\Sigma_{\sigma}$ and
\[ \frac{d}{d t} \ensuremath{\operatorname{Vol}} (F_t (\Sigma_{\sigma})) |_{t
   = 0}  = 0, \frac{d^2}{d t^2} \ensuremath{\operatorname{Vol}} (F_t
   (\Sigma_{\sigma})) |_{t = 0}  \geqslant 0 \]

We choose a sequence $\sigma_i \rightarrow \infty$ such that
$\Sigma_{\sigma_i}$ converge to a limiting volume minimizing hypersurface
$\Sigma \subset M$. We see that the possible singularity at the corner goes
away by taking limits.

We claim that {\color{blue}{{\tmem{}}}}$\Sigma$ is a graph of a function $f$
near infinity, that is to say $\Sigma$ outside a compact set is given by $x_n
= f (x_1, \ldots, x_n)$ with $x_1 \geqslant 0$. We prove this claim by scaling
techniques. Denote $\bar{x} = (x_1, \ldots, x_{n - 1})$. Take $p = (\bar{x},
x_n) \in \Sigma$ with $| \bar{x} | = 2 \sigma$ for $\sigma$ sufficiently
large. $\Lambda$ is a number greater than 1, let $S_{\Lambda} = \{x \in M :
x_n = \Lambda\}$. Consider $\Omega_{\sigma}$ which is bounded by $\partial M$,
$C_{\sigma}$ and the slabs $S_{- a_0}$ and $S_{a_0}$. Using volume comparison,
we see that
\[ | \Sigma \cap \Omega_{\sigma} | \leqslant \frac{\omega_{n - 1}}{2}
   \sigma^{n - 1} + O (\sigma^{- 1}) \sigma^{n - 1} . \]
Outside a large enough compact set, $f$ satisfies the minimal surface equation
\begin{equation}
  \sum_{i j} \left( \delta_{i j} - \frac{f_{, i} f_{, j}}{1 + | \partial f
  |^2} \right) f_{, i j} + \frac{2 (n - 1)}{n - 2} \sqrt{1 + | \partial f |^2}
  \frac{\partial}{\partial \nu_0} \log h = 0,
\end{equation}
and on the boundary $\partial \Sigma$ (also outside a compact set)
\begin{equation}
  \partial_1 f = 0.
\end{equation}
Recall that under the metric $g = h^{\frac{4}{n - 2}} \delta$, $h$ is the
conformal factor
\[ h (x) = 1 + C (n) m_{\ensuremath{\operatorname{ADM}}} | x |^{2 - n} + O (|
   x |^{1 - n}) \]
and
\[ \nu_0 = (1 + | \partial f |^2)^{- 1 / 2} (- \partial f, 1) . \]
We record the calculation of decay rate of $f$ in Lemma \ref{decay
calculation}.

Let $D_{\sigma}$ denote the portion of $\Sigma$ bounded by $C_{\sigma}$ to the
interior. According to Lemma \ref{2nd variation}, the density of the second
variation can be written as
\[ F_X = - \varphi^2 \ensuremath{\operatorname{Ric}} (\nu) - \varphi^2 | B |^2
   + | \nabla \varphi |^2 + G \]
where
\[ G =\ensuremath{\operatorname{div}} (T\ensuremath{\operatorname{div}}T -
   \nabla_T T) +\ensuremath{\operatorname{div}} \hat{Z} - 2 (\varphi B_{i j}
   T_i)_{; j} . \]
Here, $T$ is the tangential component of $X$, $\nu$ is the normal of $\Sigma$
in $M$ and $\eta$ is the normal of $\partial D_{\sigma}$ in $\Sigma$.

The integral of $F_X$ over a large bounded region $D_{\sigma}$ is then by
divergence theorem

\begin{align}
  \int_{D_{\sigma}} F_X & = \int_{D_{\sigma}} (- \varphi^2
  \ensuremath{\operatorname{Ric}} (\nu) - \varphi^2 | B |^2 + | \nabla \varphi
  |^2)\\
  & \phantom{i j k} \phantom{i j k} + \int_{\partial D_{\sigma}}
  [(T\ensuremath{\operatorname{div}}T - \nabla_T T, \eta) - 2 \varphi B (T,
  \eta) + (\hat{Z}, \eta)] .
\end{align}
Along the free boundary $\partial \Sigma$ we have

\begin{align}
  (\hat{Z}, \eta) = (\nabla_X X, \eta) & = (\nabla_{T + \varphi \nu} (T +
  \varphi \nu), \eta)\\
  & = (\nabla_T T + \varphi \nabla_T \nu + \varphi^2 \nabla_{\nu} \nu +
  \varphi \nabla_{\nu} T, \eta)\\
  & = (\nabla_T T, \eta) + \varphi B (T, \eta) - \varphi^2 A (\nu, \nu) +
  \varphi (\nabla_{\nu} T, \eta)\\
  & = (\nabla_T T, \eta) + 2 \varphi B (T, \eta) - \varphi^2 A (\nu, \nu)
\end{align}
where $A (Y, Z) := (\nabla_Y \eta, Z)$ for $Y, Z \in T \partial M$ i.e. the
second fundamental form of $\partial M$ in $M$. Since $\langle T, \eta \rangle
= 0$, hence the term left for $\partial \Sigma$ is $- \varphi^2 A (\nu, \nu)$.

From the decay conditions on $h$ and $f$, we see that the boundary terms along
$\partial D_{\sigma} \cap \Sigma$ decay faster than $\sigma^{2 - n}$, then
integral over $\partial D_{\sigma} \cap \Sigma$ tends to zero as $\sigma \to
\infty$. See Lemma \ref{decay calculation} in the appendix. By letting $\sigma
\rightarrow \infty$, we arrive the {\itshape{strong stability inequality}}
\begin{equation}
  \int_{\Sigma} (- \varphi^2 \ensuremath{\operatorname{Ric}} (\nu) - \varphi^2
  | B |^2 + | \nabla \varphi |^2) - \int_{\partial \Sigma} \varphi^2 A (\nu,
  \nu) \geqslant 0 \label{strong stability}
\end{equation}
Applying the same trick as in deriving {\eqref{stability-use}}, we see the
inequality {\eqref{strong stability}} becomes

\begin{align}
  & \int_{\Sigma} | \nabla \varphi |^2 + \frac{1}{2} \varphi^2 R_{\Sigma} +
  \int_{\partial \Sigma} \varphi^2 H_{\partial \Sigma}\\
  \geqslant & \int_{\Sigma} \frac{1}{2} \varphi^2 (R_M + |B|^2) +
  \int_{\partial \Sigma} \varphi^2 H_{\partial M} > 0. \label{conformal-rough}
\end{align}

The condition on $\varphi$ can be derived from the condition on $X$, since
\[ \varphi = \alpha \langle X, \nu \rangle = \alpha \langle \partial_n, \nu
   \rangle = \alpha h^{\frac{2}{n - 2}} (1 + | \partial f |^2)^{- 1 / 2} \]
outside a compact set for a constant $\alpha$.

Since $\varphi - \alpha = O (|x' |^{3 - n})$ we see that $\varphi - \alpha$
has finite mass and therefore we can take $\varphi$ to be any function for
which $\varphi - \alpha$ has compact support or finite mass for some constant
$\alpha$.

\subsection{Step 3: Strong stability and Gauss-Bonnet theorem}

We use the above to obtain a contradiction when $n = 3$. Specifically, taking
$\varphi \equiv 1$ in the stability inequality (\ref{stability}), we have
\[ \int_{\Sigma} K + \int_{\partial \Sigma} k_g = \int_{\Sigma} \frac{1}{2}
   R_{\Sigma} + \int_{\partial \Sigma} H_{\partial \Sigma} > 0 \]
We use the large region $D_{\sigma}$ to approximate the integral, by
Gauss-Bonnet theorem, we have
\[ \int_{\Sigma} K + \int_{\partial \Sigma \cap D_{\sigma}} k_g = 2 \pi \chi
   (D_{\sigma}) - 2 \pi - \int_{\partial D_{\sigma} - \partial \Sigma \cap
   D_{\sigma}} k_g + \alpha_1 + \alpha_2 \label{gauss bonnet with angle} \]
where $\alpha_i$ are the inner angles. Note that $D_{\sigma}$ has at least one
boundary components and possibly has positive genus, hence $\chi (D_{\sigma})
= 2 - 2 g - b \leqslant 2 - 2 \cdot 0 - 1 = 1$. So the right hand side
converge a number less than to zero, yet the left hand side converges to a
positive number by stability {\eqref{conformal-rough}}.

We see a contradiction. Therefore, when $n = 3$,
$m_{\ensuremath{\operatorname{ADM}}} \geqslant 0$.

\subsection{Step 4: Dimension reduction argument}

When $4 \leqslant n \leqslant 7$, we have the induced metric $\hat{g}$ on
$\Sigma$ in terms of coordinates $x^1, \ldots, x^{n - 1}$
\[ \hat{g}_{i j} = h (\bar{x}, f (\bar{x}))^{\frac{4}{n - 2}} (\delta_{i j} +
   f_{, i} f_{, j}) = \delta_{i j} + O (| x |^{2 - n}) \]
where $i, j$ ranges from $1$ to $n - 1$. Therefore, $(\Sigma, \hat{g})$ is
asymptotically flat and has mass zero. We consider the pair of operators
$(L_{\hat{g}}, B_{\hat{g}})$ defined by $L_{\hat{g}} = - \Delta_{\hat{g}} +
c_{n - 1} R_{\hat{g}}^{\Sigma}$ in $\Sigma$ and $B_{\hat{g}} = \partial_{\eta}
+ 2 c_{n - 1} H_{\hat{g}}^{\partial \Sigma}$ on $\partial \Sigma$ where $\eta$
is the outward normal of $\partial \Sigma$ in $\Sigma$ under the metric
$\hat{g}$.

Now we want to find a positive solution $u$ to the boundary value problem
\begin{equation}
  \label{neumann} \left\{ \begin{array}{lcll}
    L_{\hat{g}} u & = & 0 & \text{in } \Sigma,\\
    B_{\hat{g}} u & = & 0 & \text{on } \partial \Sigma
  \end{array} \right.
\end{equation}
satisfying also $u > 0$ on $\Sigma$ and $u \rightarrow 1$ at infinity.

Following from {\eqref{conformal-rough}}, we have that for any domain $D
\subset \Sigma$, we have that $\lambda_1 (D) > 0$. Using Fredholm alternative
and {\cite[Proposition 3.3]{almaraz-positive-2016}}, we have a unique solution
$v \in C^{2, \alpha}_{\gamma} (\Sigma)$ (see definitions of such weighted
Holder spaces in {\cite{almaraz-positive-2016}}) to
\begin{equation}
  \left\{ \begin{array}{lcll}
    L_{\hat{g}} v & = & - c_{n - 1} R_{\hat{g}}^{\Sigma} & \text{in }
    \Sigma,\\
    B_{\hat{g}} v & = & - c_{n - 1} H_{\hat{g}}^{\partial \Sigma} & \text{on }
    \partial \Sigma .
  \end{array} \right.
\end{equation}
Then $u = v + 1$ is the desired solution.

We turn to positivity of $u$. Suppose now that the set $\Omega = \{x \in
\Sigma : u < 0\}$ is not empty. Since $u \to 1$ at infinity then $\Omega$ must
be a bounded domain of $\Sigma$. On $\partial \Omega$, $u = 0$. Such $u$
restricted to $\Omega$ will give an eigenfunction $u$ with zero eigenvalue.
However $\lambda_1 (\Omega) > 0$. This is a contradiction. So $u \geqslant 0$.
The strict positivity in the interior $\Sigma$ follows by applying the usual
maximum principle on a domain whose boundary is away from $\partial \Sigma$.
The strict positivity on the boundary $\partial \Sigma$ follows from Hopf's
maximum principle on a ball tangent to the boundary at our chosen point. In
conclusion, $u > 0$.

We see that $u$ has the asymptotics $u = 1 + m_0 | \bar{x} |^{3 - n} + O (|
\bar{x} |^{2 - n})$, in particular has finite mass. Note that the dimension of
$\Sigma$ is $n - 1$. Take $\varphi = u$ in {\eqref{conformal-rough}}, we see
that
\begin{equation}
  - 2 \int_{\partial \Sigma} H_{\hat{g}}^{\partial \Sigma} u^2 - \int_{\Sigma}
  R_{\hat{g}}^{\Sigma} u^2 < 2 \int_{\Sigma} | \nabla u|^2 < \frac{1}{c_{n -
  1}} \int_{\Sigma} | \nabla u|^2 . \label{eq:conformal-2nd}
\end{equation}
Let $\bar{g} = u^{\frac{4}{n - 3}} \hat{g}$, then $(\Sigma^{n - 1}, \bar{g})$
is scalar flat with minimal boundary according to {\eqref{neumann}}. We turn
to the mass of $(\Sigma^{n - 1}, \bar{g})$. The mass of $(\Sigma^{n - 1},
\hat{g})$ and $(\Sigma^{n - 1}, \bar{g})$ are denoted respectively $\hat{m}$
and $\bar{m}$. We have
\begin{eqnarray}
  \int_{\Sigma} | \nabla u|^2 & = & \lim_{\sigma \to \infty} \int_{D_{\sigma}}
  | \nabla u|^2 \nonumber\\
  & = & \lim_{\sigma \to \infty} \int_{D_{\sigma}} - u \Delta_{\hat{g}} u +
  \int_{\partial D_{\sigma} \cap \partial \Sigma} u \frac{\partial u}{\partial
  \eta} + \int_{\partial D_{\sigma} \cap \partial \Sigma} u \frac{\partial
  u}{\partial \eta} \nonumber\\
  & = & \lim_{\sigma \to \infty} \int_{D_{\sigma}} - R^{\Sigma}_{\hat{g}} u^2
  - 2 \int_{\partial D_{\sigma} \cap \partial \Sigma} u^2
  H_{\hat{g}}^{\partial \Sigma} + \int_{\partial D_{\sigma} \cap \Sigma} u
  \frac{\partial u}{\partial \eta} . \nonumber
\end{eqnarray}
Considering the decay of $u - 1$ and {\eqref{eq:conformal-2nd}},
\begin{equation}
  \lim_{\sigma \to \infty} \int_{\partial D_{\sigma} \cap \Sigma}
  \frac{\partial u}{\partial \eta} = \lim_{\sigma \to \infty} \int_{\partial
  D_{\sigma} \cap \Sigma} u \frac{\partial u}{\partial \eta} > 0.
\end{equation}
Note that
\begin{equation}
  \bar{m} = \bar{m} - \hat{m} = \lim_{\sigma \to \infty} \int_{\partial
  D_{\sigma} \cap \Sigma} - 4 \frac{\partial u}{\partial \eta} < 0.
\end{equation}
We infer as well that $m_0 < 0$.

In conclusion, $(\Sigma, u^{4 / (n - 3)} \hat{g})$ is asymptotically flat, and
scalar flat with minimal boundary and has negative mass $\bar{m} < 0$. The
contradiction follows inductively from the case $n = 3$. Here we finish the
proof with rigidity statement given by {\cite[Lemma 3.3,
3.4]{almaraz-positive-2016}}.{\qed}

\begin{remark}
  In the case of dimension 3, we can avoid choosing a height $a$ to finish the
  proof. Since in dimension 3, we could utilize a {\itshape{logarithm cutoff
  trick}} on the stability inequality as in {\cite{schoen1979}}. In order to
  be consistent with higher dimensions, we use the {\itshape{strong
  stability}} in every dimension $3 \leqslant n < 8$.
\end{remark}

\section{Geometry of $(\mathbb{T}^{n - 1} \times [0, 1])
\#M_0$}\label{geometry}

In this section, we study the geometry of $(\mathbb{T}^{n - 1} \times [0, 1])
\#M_0$. More specifically, we settle the non-existence of metrics with
positive scalar curvature and minimal boundary, and non-existence of
scalar-flat metrics with mean convex boundary on this manifold $(\mathbb{T}^{n
- 1} \times [0, 1]) \#M_0$. This non-existence result was essentially due to Gromov and Lawson {\cite{gromov-positive-1983}}. For the convenience of the reader, we include our sketch of their proof. Note that their proof used minimal surfaces techniques and we use instead free boundary minimal surfaces.
We then adopt an idea of \
{\cite{lohkamp-scalar-1999}} to modify an asymptotic flat manifold with a
noncompact boundary into a manifold of such form. By keeping track of the
scalar curvature and the mean curvature, we can provide a proof of Theorem
\ref{thm:non-existence}. This proof is simpler in the sense that we are doing
analysis on a compact manifold, and avoiding the analysis of asymptotic
behaviors.

\begin{lemma}
  \label{perturbation-to-strict-inequality1} Given any compact manifold $(M^n,
  g)$ with $R_g \geqslant 0$ in $M$ and $H_g \geqslant 0$ on $\partial M$.
  Then $g$ can be conformally changed to a metric $\tilde{g}$ satisfying
  $R_{\tilde{g}} > 0$ everywhere in $M$ and $H_{\tilde{g}} \equiv 0$ on
  $\partial M$ unless $M$ is Ricci flat with totally geodesic boundary.
\end{lemma}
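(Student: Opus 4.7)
My plan hinges on the mixed eigenvalue problem for the conformal Laplacian and conformal boundary operator of Section \ref{fbmh}:
\[
L u = c_n R_g u - \Delta_g u = \lambda u \quad \text{in } M, \qquad B u = \partial_\nu u + 2 c_n H_g u = 0 \quad \text{on } \partial M.
\]
Standard elliptic theory for such mixed problems supplies a principal eigenvalue $\lambda_1$ realised by a smooth positive eigenfunction $u > 0$. Testing against $u$, integrating by parts, and using $B u = 0$ to absorb the boundary contribution of $\int_M u(-\Delta_g u)$ yields the Rayleigh identity
\[
\lambda_1 \int_M u^2 \;=\; \int_M \bigl(|\nabla u|^2 + c_n R_g u^2\bigr) \;+\; 2 c_n \int_{\partial M} H_g u^2.
\]
Every summand on the right is non-negative by the hypotheses, so $\lambda_1 \geq 0$.

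If $\lambda_1 > 0$, the lemma follows at once: setting $\tilde g = u^{4/(n-2)} g$, the conformal identities of Section \ref{fbmh} give $R_{\tilde g} = c_n^{-1} u^{-(n+2)/(n-2)} L u = c_n^{-1} \lambda_1 u^{-4/(n-2)} > 0$ throughout $M$ and $H_{\tilde g} = \tfrac{1}{2 c_n} u^{-2/(n-2)} B u \equiv 0$ along $\partial M$.

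The substantive case is $\lambda_1 = 0$. The Rayleigh identity then forces each of its non-negative summands to vanish: $|\nabla u| \equiv 0$, so $u$ is a positive constant, and consequently $R_g \equiv 0$ in $M$ and $H_g \equiv 0$ on $\partial M$. To close the lemma I must upgrade these to $\mathrm{Ric}_g \equiv 0$ in $M$ and vanishing boundary second fundamental form $A_g \equiv 0$. My route is contrapositive and Bourguignon--Kazdan--Warner in spirit: assume $\mathrm{Ric}_g \not\equiv 0$ or $A_g \not\equiv 0$ and use the standard first variation
\[
\dot R\bigr|_{t=0} = -\Delta_g(\mathrm{tr}\, h) + \mathrm{div}\,\mathrm{div}\, h - \langle \mathrm{Ric}_g, h\rangle
\]
for a non-conformal family $g_t = g + t h$. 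The interior choice $h = -\mathrm{Ric}_g$, combined with the contracted second Bianchi identity $\mathrm{div}\,\mathrm{Ric}_g = \tfrac{1}{2} d R_g$ and $R_g \equiv 0$, yields $\dot R|_{t=0} = |\mathrm{Ric}_g|^2 \geq 0$, strictly positive wherever $\mathrm{Ric}_g \neq 0$; one then adjusts $h$ in a Fermi-coordinate collar of $\partial M$ via the analogous boundary variation formula for $H$ so that simultaneously $\dot H|_{t=0} \geq 0$, strictly positive where $A_g \neq 0$. For $t > 0$ small the perturbed metric $g_t$ satisfies $R_{g_t} \geq 0$ and $H_{g_t} \geq 0$ with strict sign somewhere, so $\lambda_1(g_t) > 0$ from the Rayleigh identity applied to $g_t$, and the first case applied to $g_t$ produces the required metric.

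The main obstacle will be the coupled construction of the perturbation tensor $h$: simultaneously securing non-negativity of both variations with strict positivity somewhere, and controlling the $O(t^2)$ corrections so that $R_{g_t}, H_{g_t} \geq 0$ for small $t > 0$, is the delicate step in the collar. I also want to flag a subtle point: strictly inside the conformal class $[g]$ itself no metric achieves the target when $\lambda_1 = 0$, since any $v^{4/(n-2)} g$ with $R > 0$ and $H = 0$ would require $\Delta_g v < 0$ everywhere in $M$ and $\partial_\nu v = 0$ on $\partial M$, contradicting $\int_M \Delta_g v = \int_{\partial M} \partial_\nu v = 0$. Hence the non-conformal preparation step is essential, and the $\tilde g$ produced should be understood as conformal to $g_t$ rather than to $g$ itself.
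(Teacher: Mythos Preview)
Your overall strategy matches the paper's: solve the eigenvalue problem $Lu = \lambda u$, $Bu = 0$; if $\lambda_1 > 0$ the conformal change $\tilde g = u^{4/(n-2)} g$ finishes; if $\lambda_1 = 0$ then $R_g \equiv 0$, $H_g \equiv 0$, $u \equiv \text{const}$, and one must first perturb $g$ non-conformally. Your closing caveat that the output metric is conformal to the perturbed $g_t$ rather than to $g$ itself is correct and applies equally to the paper's argument.

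The gap is in your execution of the $\lambda_1 = 0$ case. You aim to choose $h$ so that $R_{g_t} \geq 0$ and $H_{g_t} \geq 0$ hold \emph{pointwise} for small $t > 0$, but the difficulty you flag is fatal rather than merely delicate. With $h = -\operatorname{Ric}_g$ one has $\dot R = |\operatorname{Ric}_g|^2$, which vanishes wherever $\operatorname{Ric}_g$ does; at such points $R_{g_t} = O(t^2)$ with no sign control, and there is no reason for $R_{g_t} \geq 0$ to hold for any $t > 0$. Localising $h$ by a cutoff $\varphi$ does not help either, since $\dot R$ then acquires the terms $-\Delta(\operatorname{tr}h) + \operatorname{div}\operatorname{div}h$, which involve derivatives of $\varphi$ and again carry no sign. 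The paper sidesteps this entirely by differentiating the \emph{eigenvalue} rather than the curvatures: since $u_0 \equiv 1$ and $\lambda_0 = 0$, differentiating the Rayleigh quotient gives
\[
\dot\lambda \;=\; \frac{c_n}{\operatorname{vol}(M,g)}\Bigl(\int_M \dot R \,d\mu + 2\int_{\partial M} \dot H \,d\sigma\Bigr) \;=\; \frac{c_n}{\operatorname{vol}(M,g)}\Bigl(\int_M \langle \operatorname{Ric}_g, \gamma\rangle \,d\mu + \int_{\partial M} A_{ij}\,\delta g^{ij} \,d\sigma\Bigr),
\]
the divergence pieces of $\dot R$ having integrated away against the boundary variation of $H$. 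Now only this single integral needs to be positive, which is trivial to arrange: take $\gamma = \varphi\,\operatorname{Ric}_g$ with $\varphi \geq 0$ supported away from $\partial M$ and positive near a point where $\operatorname{Ric}_g \neq 0$ (or, in the Ricci-flat case, choose $\gamma$ with $\delta g^{ij} = A^{ij}$ on $\partial M$). Then $\lambda_1(g_t) > 0$ for small $t > 0$ and the conformal step applies to $g_t$ directly, with no need to know the pointwise sign of $R_{g_t}$ or $H_{g_t}$ anywhere.
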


\begin{proof}
  We have two cases to consider.

  {{\em Case i:\/}} $R_g > 0$ somewhere in $M$ or $H_g > 0$ somewhere on
  $\partial M$. The existence of $\tilde{g}$ follows from Lemma
  \ref{lm:positive minimal} by considering the eigenvalue problem
  \[ \left\{\begin{array}{ll}
       L (M, g) u = \lambda u & \quad \text{in } M,\\
       B (M, g) u = 0 & \quad \text{ on } \partial M.
     \end{array}\right. \]
  {{\em Case ii:\/}} $R_g \equiv 0$ in $M$ and $H_g \equiv 0$ on $\partial M$.
  We consider a family of metrics $g_t = g + t \gamma$ with $\gamma$ to be
  chosen later. Note that $t = 0$, $R_0 \equiv 0$ and $H_0 \equiv 0$. Suppose
  that $\lambda_t$ is the first eigenvalue of $L_t := L_{g_t}$ and $B_t :=
  B_{g_t}$, by variational characterization of eigenvalues
  \[ \lambda_t = \frac{\int_M (| \nabla_t u_t |^2 + c_n R_t u^2) \mathrm{d}
     \mu + 2 c_n \int_{\partial M} H_t u^2_t \mathrm{d} \sigma_t}{\int_M u_t^2
     \mathrm{d} \mu_t} . \]
  Note that $u_0 = 1$ and $\lambda_0 = 0$. We use the dot notation to denote
  differentiation with respect to $t$ and evaluation at $t = 0$. For instance,
  $\dot{\lambda} = \lambda' (0)$. In differentiation, all terms drop out
  except for terms involving $\dot{R}$ and $\dot{H}$, hence
  \begin{eqnarray*}
    \dot{\lambda} & = & \frac{c_n}{\ensuremath{\operatorname{vol}} (M, g)}
    [\int_M \dot{R} \mathrm{d} \mu + 2 \int_{\partial M} \dot{H} \mathrm{d}
    \sigma]\\
    & = & \frac{c_n}{\ensuremath{\operatorname{vol}} (M, g)} [\int_M \langle
    \ensuremath{\operatorname{Ric}}_g - \frac{1}{2} R_g g, \gamma_{} \rangle
    \mathrm{d} \mu + \int_{\partial M} (A_{i j} - H_g g_{i j}) \delta g^{i j}
    \mathrm{d} \sigma]\\
    & = & \frac{c_n}{\ensuremath{\operatorname{vol}} (M, g)} [\int_M \langle
    \ensuremath{\operatorname{Ric}}_g, \gamma \rangle \mathrm{d} \mu +
    \int_{\partial M} A_{i j} \delta g^{i j} \mathrm{d} \sigma] .
  \end{eqnarray*}
  If $M$ is not Ricci flat, we choose $\gamma_{i j} = \varphi R_{i j}$ where
  $\varphi$ is a supported away from the boundary and positive in the
  sufficiently neighborhood around a point where $R_{i j}$ is not zero. This
  makes $\dot{\lambda} > 0$. If $M$ is Ricci flat, but the boundary is not
  totally geodesic i.e. $A_{i j} \neq 0$, choosing $\gamma$ such that $\delta
  g^{i j} = A^{i j}$ on the boundary will lead to $\dot{\lambda} > 0$.
  Therefore, we can do the same thing now as {\itshape{Case i}}.
\end{proof}

A similar argument gives the following lemma,

\begin{lemma}
  \label{mean convex boundary deformation}Assume that $(M^n, g)$ satisfies the
  conditions in Lemma \ref{perturbation-to-strict-inequality1}, then the
  metric $g$ can be conformally changed to a metric with $R_{\tilde{g}} \equiv
  0$ in $M$ and $H_{\tilde{g}} > 0$ everywhere on $\partial M$ unless $M$ is
  Ricci flat with totally geodesic boundary.
\end{lemma}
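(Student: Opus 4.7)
The plan is to mirror the proof of Lemma \ref{perturbation-to-strict-inequality1}, replacing the Robin-type eigenvalue problem $(Lu = \lambda u,\ Bu = 0)$ used there with the Steklov-type problem $(L\phi = 0,\ B\phi = \mu\phi)$ underlying Lemma \ref{lm:flat minimal}. As before the argument splits into two cases.

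\emph{Case i:} $R_g > 0$ somewhere in $M$ or $H_g > 0$ somewhere on $\partial M$. I examine the first Steklov eigenvalue
\[
  \mu = \inf_{\phi} \frac{\int_M (|\nabla \phi|^2 + c_n R_g \phi^2) + 2 c_n \int_{\partial M} H_g \phi^2}{\int_{\partial M} \phi^2},
\]
where the infimum is taken over $\phi \in C^\infty(M)$ with $\int_{\partial M} \phi^2 > 0$. For nonconstant $\phi$ the gradient term alone is strictly positive, while for a nonzero constant $\phi$ the numerator equals $c_n \phi^2 (\int_M R_g + 2 \int_{\partial M} H_g) > 0$ by hypothesis. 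Hence $\mu > 0$, and the positive first eigenfunction $\phi$ yields the conformal metric $\tilde g = \phi^{4/(n-2)} g$, which is scalar-flat with $H_{\tilde g} > 0$, exactly as in the proof of Lemma \ref{lm:flat minimal}.

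\emph{Case ii:} $R_g \equiv 0$ in $M$ and $H_g \equiv 0$ on $\partial M$. I consider the perturbation $g_t = g + t \gamma$ and the first Steklov eigenvalue $\mu_t$ of $(L_{g_t}, B_{g_t})$. At $t = 0$ one has $\phi_0 = 1$ and $\mu_0 = 0$. Differentiating the Rayleigh quotient and using that all terms not involving $\dot R$ or $\dot H$ drop out, the computation parallels the one in Lemma \ref{perturbation-to-strict-inequality1}, yielding
\[
  \dot \mu = \frac{c_n}{\mathrm{vol}(\partial M, g)} \left[ \int_M \langle \mathrm{Ric}_g, \gamma \rangle \, d\mu + \int_{\partial M} A_{ij} \delta g^{ij} \, d\sigma \right].
\]
If $M$ is not Ricci flat, I choose $\gamma = \varphi \, \mathrm{Ric}_g$ with $\varphi$ a bump supported near an interior point where $\mathrm{Ric}_g \neq 0$; if $M$ is Ricci flat but $\partial M$ is not totally geodesic, I choose $\gamma$ so that $\delta g^{ij} = A^{ij}$ on $\partial M$. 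In either case $\dot \mu > 0$, so $\mu_t > 0$ for sufficiently small $t > 0$ and Case i applies to the perturbed metric $g_t$.

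The main obstacle I anticipate is the bookkeeping in the variational formula for $\mu_t$: one must verify that cross-terms involving $\dot \phi$ vanish at $t = 0$ by stationarity, and that the first-order variation of the boundary volume form $d\sigma_t$ in the denominator contributes nothing because $\mu_0 = 0$. Both are automatic once $\phi_0 \equiv 1$ is recognized as an eigenfunction with eigenvalue zero, so the computation goes through by the same manipulations as in the Robin case.
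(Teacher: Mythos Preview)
Your proposal is correct and follows exactly the route the paper indicates: the paper's own proof is a one-sentence sketch saying ``similar to the previous lemma, except that we consider the Steklov-type eigenvalue problem,'' and you have carried out precisely that substitution, splitting into the same two cases and computing $\dot\mu$ by the same variational manipulation (with $\operatorname{vol}(\partial M,g)$ replacing $\operatorname{vol}(M,g)$ in the denominator, as it should). Your treatment in fact supplies the details the paper omits.
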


\begin{proof}
  The proof is similar to the previous lemma, except that we consider the
  Steklov-type eigenvalue problem instead:
  \begin{equation}
    L (M, g) \phi = 0 \quad \text{in } M, \hspace{2.0em} B (M, g) \phi =
    \lambda \phi \quad \text{on } \partial M
  \end{equation}
  We omit the details.
\end{proof}

In the following, we generalize Bochner's theorem {\cite[Chapter 7, Section
3]{petersen-riemannian-1998}} to manifolds with boundary. Denote

\begin{align*}
  H_T & = \left\{ \omega \in \wedge^1 M : d \omega = 0, \delta \omega = 0, \nu
  \lrcorner \omega = 0 \text{ on } \partial M \right\},\\
  H_N & = \{\omega \in \wedge^1 M : d \omega = 0, \delta \omega = 0,
  \nu^{\flat} \wedge \omega = 0 \text{ on } \partial M\} .
\end{align*}

For more details, refer to a good exposition of Hodge-Morrey theory in
{\cite[Chapter 5]{giaquinta-cartesian-1998}}.

\begin{lemma}
  Given any compact manifold $(M^n, g)$ with nonnegative Ricci curvature
  $\ensuremath{\operatorname{Ric}}_g \geqslant 0$ with boundary whose second
  fundamental form is nonnegative, then every harmonic 1-form \ $\omega \in
  H_T \cup H_N$ is parallel.
\end{lemma}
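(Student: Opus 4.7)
The plan is to adapt the classical Bochner argument to the manifold-with-boundary setting, extracting a favorable sign for the boundary term from each of the two boundary conditions defining $H_T$ and $H_N$. Starting from the Weitzenb\"ock identity for $1$-forms, $\Delta_H \omega = \nabla^*\nabla \omega + \mathrm{Ric}(\omega^\sharp)^\flat$, harmonicity of $\omega$ gives $\nabla^*\nabla \omega = -\mathrm{Ric}(\omega^\sharp)^\flat$. Pairing with $\omega$ and integrating by parts on $M$ yields
\[
\int_M \bigl(|\nabla \omega|^2 + \mathrm{Ric}(\omega, \omega)\bigr)\, d\mathrm{vol}_g = \int_{\partial M} \langle \nabla_\nu \omega, \omega \rangle\, d\sigma_g .
\]
Since $\mathrm{Ric}_g \geqslant 0$, it will suffice to show that the right-hand integrand is pointwise $\leqslant 0$; this will force $\nabla \omega \equiv 0$, which is the conclusion.

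To evaluate the boundary integrand, I would work in a local orthonormal frame $\{e_1, \ldots, e_{n-1}, \nu\}$ near $\partial M$ with $\nu$ extended as the gradient of the distance function to $\partial M$ (so that $\nabla_\nu \nu = 0$) and with the tangential frame $\{e_a\}$ chosen to be synchronous at a point $p \in \partial M$, i.e.\ $\nabla^{\partial M}_{e_a} e_b(p) = 0$. Decompose $\omega = \omega_a e^a + \omega_\nu \nu^\flat$. In the case $\omega \in H_T$ one has $\omega_\nu = 0$ on $\partial M$; the closedness relation $(d\omega)(\nu, e_a) = 0$ combined with the Weingarten formula $\nabla_{e_a}\nu = A(e_a, e_b)e_b$ gives $(\nabla_\nu \omega)(e_a) = -A(e_a, e_b)\omega_b$ at $p$, so that
\[
\langle \nabla_\nu \omega, \omega \rangle = -A(\omega^\sharp, \omega^\sharp) \leqslant 0
\]
by the assumption $A \geqslant 0$. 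In the case $\omega \in H_N$ one has $\omega_a = 0$ on $\partial M$; the co-closedness relation $\delta \omega = 0$ combined with the Gauss formula $\nabla_{e_a} e_a = -A(e_a, e_a)\nu$ at $p$ yields $\nu(\omega_\nu) = -H \omega_\nu$, so that
\[
\langle \nabla_\nu \omega, \omega \rangle = -H \omega_\nu^2 \leqslant 0
\]
because $H = \mathrm{tr}\, A \geqslant 0$.

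In either case, the boundary integrand is nonpositive while the bulk integrand $\mathrm{Ric}(\omega, \omega)$ is also nonnegative, so the integrated Weitzenb\"ock identity forces $|\nabla \omega|^2 \equiv 0$, proving that $\omega$ is parallel. The main obstacle is the careful bookkeeping in the two boundary calculations: one must choose the adapted frame so as to eliminate the nuisance terms coming from $\nabla_\nu \nu$ and from the Christoffel symbols of $\partial M$, and correctly identify which of $d\omega = 0$ or $\delta \omega = 0$ controls the otherwise-inaccessible normal derivative $(\nabla_\nu \omega)(e_a)$ or $\nu(\omega_\nu)$ in each case. Once this is carried out, $A$ (respectively $H$) appears with exactly the right sign and the argument closes as in the classical Bochner theorem.
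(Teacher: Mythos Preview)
Your proposal is correct and follows essentially the same route as the paper's proof: integrate the Weitzenb\"ock identity, then use $d\omega=0$ in the $H_T$ case and $\delta\omega=0$ in the $H_N$ case to show the boundary integrand equals $-A(\omega^\sharp,\omega^\sharp)$ and $-H\,\omega_\nu^2$ respectively, forcing $\nabla\omega=0$. The only cosmetic difference is that the paper computes the $H_T$ boundary term by evaluating $d\omega(\omega^\sharp,\nu)=0$ directly rather than via your frame calculation, but the content is identical.
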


\begin{proof}
  Let $e_i$ be orthonormal frame where $e_1 = \nu$ on $\partial M$ and
  $\theta^i$ be its dual frame. Recall the Bochner-Wietzenbock formula for
  $\omega$,
  \[ 0 = (d \delta + \delta d) \omega = \nabla^2_{e_i, e_i} \omega + \theta^i
     \wedge (e_j \lrcorner R (e_i, e_j) \omega) \]
  where $R (X, Y) = \nabla_X \nabla_Y - \nabla_Y \nabla_X - \nabla_{[X, Y]}$.

  Using integration by parts and direct calculation,

  \begin{align*}
    0 = \langle \Delta \omega, \omega \rangle & = \int_M \langle
    \nabla^2_{e_i, e_i} \omega, \omega \rangle + \langle \omega, \theta^i
    \wedge (e_j \lrcorner R (e_i, e_j) \omega) \rangle\\
    & = \int_{\partial M} \langle \nabla_{\nu} \omega, \omega \rangle -
    \int_M (\ensuremath{\operatorname{Ric}}(\omega, \omega) + | \nabla \omega
    |^2) .
  \end{align*}
  If $\omega \in H_N$, since $\omega$ is a 1-form, we can assume that $\omega
  = \phi \theta^1$ on $\partial M$. Since $0 = \delta \omega = \sum_{i = 1}^n
  \theta^i \lrcorner \nabla_{e_i} \omega =\ensuremath{\operatorname{div}}_M
  \omega$, we have

  \begin{align*}
    \langle \nabla_{\nu} \omega, \omega \rangle & = \phi (\nabla_{\nu} \omega,
    \nu)\\
    & = \phi \ensuremath{\operatorname{div}}_M \omega - \sum_{i = 2}^n \phi
    (\nabla_{e_i} \omega, e_i)\\
    & = - \phi^2 H_{\partial M} = - | \omega |^2 H_{\partial M}
  \end{align*}
  and then
  \[ - \int_{\partial M} H_{\partial M} | \omega |^2 - \int_M
     (\ensuremath{\operatorname{Ric}}(\omega, \omega) + | \nabla \omega |^2) =
     0. \]
  If $\omega \in H_T$, we have $\omega^{\sharp}$ is tangent to $\partial M$,
  extend $\nu$ to all of $M$ such that $\omega^{\sharp}$ is orthogonal to
  $\nu$ in an open neighborhood of $\partial M$. Then along $\partial M$,

  \begin{align*}
    0 = d \omega (\omega^{\sharp}, \nu) & = \omega^{\sharp} (\omega (\nu)) -
    \nu (\omega (\omega^{\sharp})) - \omega (\nabla_{\omega^{\sharp}} \nu -
    \nabla_{\nu} \omega^{\sharp})\\
    & = - \langle \nabla_{\omega^{\sharp}} \nu, \omega^{\sharp} \rangle -
    (\nabla_{\nu} \omega, \omega^{\sharp})\\
    & = - A^{\partial M} (\omega^{\sharp}, \omega^{\sharp}) - \langle
    \nabla_{\nu} \omega, \omega \rangle
  \end{align*}
  will give
  \[ - \int_{\partial M} A^{\partial M} (\omega^{\sharp}, \omega^{\sharp}) -
     \int_M (\ensuremath{\operatorname{Ric}}(\omega, \omega) + | \nabla \omega
     |^2) = 0. \]
  In either case, by nonnegativity of $\ensuremath{\operatorname{Ric}}$ and
  second fundamental form of $\partial M$, it is necessary that $| \nabla
  \omega | = 0$ i.e. $\omega$ is parallel.
\end{proof}

In order to apply these lemmas, we still need that the connected sum
$(\mathbb{T}^{n - 1} \times [0, 1]) \#M_0$ is not Ricci flat with totally
geodesic boundary. We now denote the manifold $(\mathbb{T}^{n - 1} \times [0,
1]) \#M_0$ by $(M^n, g)$. We assume the contrary i.e.
$\ensuremath{\operatorname{Ric}}_g \equiv 0$ and $A_g \equiv 0$. Since the
boundary is totally geodesic, we can glue two copies of $M$ along the boundary
to get a Ricci flat manifold and reduce to the closed case. However, in line
with previous lemmas, we apply again Hodge-Morrey theory with boundary.

\begin{lemma}
  \label{notflat}The manifold $M$ is NOT Ricci flat with totally geodesic
  boundary.
\end{lemma}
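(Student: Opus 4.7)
The plan is to argue by contradiction: assuming $M = (\mathbb{T}^{n-1}\times[0,1])\# M_0$ carries a metric with $\mathrm{Ric}_g \equiv 0$ and totally geodesic boundary, I will deduce that $M$ is a flat slab, forcing $M_0$ to be a topological sphere and contradicting the nontriviality of the connect summand.

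The first step is to produce parallel $1$-forms from the torus factor. Since the connect sum is performed in the interior and $n\geq 3$, we have $H^1(M;\mathbb{R}) \supseteq H^1(\mathbb{T}^{n-1}\times[0,1];\mathbb{R}) = \mathbb{R}^{n-1}$. Hodge--Morrey theory for manifolds with boundary represents each class by a unique tangentially harmonic form, giving linearly independent $\omega_1,\ldots,\omega_{n-1}\in H_T$. The immediately preceding lemma (applied with $\mathrm{Ric}_g\equiv 0$ and $A^{\partial M}\equiv 0$) asserts that each $\omega_i$ is parallel; since parallel $1$-forms have constant pairwise inner products, linear independence in cohomology upgrades to linear independence at every point.

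Next I would promote this to a global parallel orthonormal frame and conclude flatness. The dual vector fields $X_i=\omega_i^{\sharp}$ are parallel and tangent to $\partial M$ (since $\omega_i\in H_T$ means $\omega_i(\nu)=0$). Metric compatibility of $\nabla$ forces the orthogonal complement $D^\perp$ of $D=\mathrm{span}\{X_i\}$ to be parallel as well; this $D^\perp$ is $1$-dimensional and transverse to $\partial M$. Choosing a unit section $\eta$ of $D^\perp$ yields a global parallel orthonormal frame $(X_1,\ldots,X_{n-1},\eta)$, so the curvature tensor of $M$ vanishes identically and $M$ is flat with totally geodesic boundary.

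Finally I would extract the topology. The universal cover $\widetilde M$ is then a simply connected, complete, flat manifold whose boundary lies on affine hyperplanes; invariance under the $n-1$ parallel translation directions forces $\widetilde M \cong \mathbb{R}^{n-1}\times[0,L]$. The deck transformations preserve the parallel frame, so they act by pure Euclidean translations along the slab factor, embedding $\pi_1(M)$ as a cocompact lattice and giving $\pi_1(M)\cong \mathbb{Z}^{n-1}$. On the other hand van Kampen (valid for interior connect sums in dimension $\geq 3$) yields $\pi_1(M)=\mathbb{Z}^{n-1}*\pi_1(M_0)$, which together with the preceding identification forces $\pi_1(M_0)=1$. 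Comparing universal covers, the universal cover of $(\mathbb{T}^{n-1}\times[0,1])\# M_0$ with $\pi_1(M_0)=1$ is the slab $\mathbb{R}^{n-1}\times[0,1]$ with copies of $M_0\setminus B^n$ attached at each $\mathbb{Z}^{n-1}$-translate of the connect-sum point; for this to agree with the clean flat slab, each $M_0\setminus B^n$ must itself be a ball, i.e.\ $M_0\cong S^n$, contradicting the nontriviality of the summand. I expect the main obstacle to be this final topological comparison step: the differential geometry producing flatness is clean from Hodge--Morrey and the previous lemma, whereas tracking the universal cover of the connect sum and identifying the free-product structure of $\pi_1(M)$ with the lattice in $\mathbb{R}^{n-1}$ requires careful bookkeeping.
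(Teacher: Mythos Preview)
Your argument is correct and shares the paper's overall architecture---use Hodge--Morrey and the preceding Bochner lemma to produce parallel $1$-forms, deduce flatness, then contradict the nontriviality of $M_0$---but the two proofs differ in how they obtain the $n$-th parallel direction. You work only with the $(n-1)$ tangential harmonic forms in $H_T$ coming from absolute cohomology and then argue that the orthogonal complement line bundle $D^\perp$ is parallel and (by orientability) admits a parallel unit section. The paper instead pulls back the $[0,1]$-coordinate to get a \emph{relative} class $[\theta_1]\in H^1(M,\partial M;\mathbb{Z})$, takes its harmonic representative $\theta_1^H\in H_N$, and applies the same Bochner lemma (which covers $H_N$ as well as $H_T$) to make it parallel; this yields $n$ parallel $1$-forms at once, and the degree computation $\int_M \theta_1^H\wedge\cdots\wedge\theta_n^H=1$ gives pointwise linear independence immediately. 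The paper's route is a little shorter for the flatness step, while yours avoids relative cohomology at the cost of the complement argument. On the other hand, the paper is terse at the end---it simply declares that flatness ``contradicts our initial assumption'' (namely that $M_0$ is nontrivial) without spelling out why---whereas your van Kampen and universal-cover analysis makes this final topological step explicit; that extra work is genuine content the paper omits.
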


\begin{proof}
  Assume the contrary, we look at the degree 1 map from $M$ to $\mathbb{T}^{n
  - 1} \times [0, 1]$:
  \[ \pi : M \to \mathbb{T}^{n - 1} \times [0, 1] . \]
  Given the standard coordinates on $\mathbb{T}^{n - 1} \times [0, 1]$, choose
  the form $d x^i$, we pull it back using $\pi$ to $M$ i.e. $\theta_i =
  \pi^{\ast} d x^i$, we have that $[\theta^i] \in H^1 (M, \mathbb{Z})$ for $2
  \leqslant i \leqslant n$ and $\theta_1$ is a representative element of a
  relative necessary class in $H^1 (M, \partial M, \mathbb{Z})$. Since the
  degree of $\pi$ is 1,
  \[ \int_M \theta_1 \wedge \theta_2 \wedge \cdots \wedge \theta_n = 1. \]
  Using Hodge-Morrey theory {\cite[Chapter 5, Section
  2]{giaquinta-cartesian-1998}}, for $2 \leqslant i \leqslant n$, we can
  modify $\theta_i$ to its harmonic representative $\theta_i^H \in H_T$ and
  $[\theta_i^H] \in H^1 (M, \mathbb{Z})$. Similarly, we can modify $\theta_1$
  to its harmonic representative $\theta_1^H \in H_N$ and $[\theta_1^H] \in
  H^1 (M, \partial M, \mathbb{Z})$. By the previous lemma, these $\theta_i^H$
  are parallel. Let $\theta_i = \theta_i^H + d a_i$ where $a_i$ are functions.
  Let $i : \partial M \to M$ denotes the canonical injection, then along
  $\partial M$, $\nu \wedge \theta_1^H = 0$, $i^{\ast} \theta_1^H = 0$ and
  $i^{\ast} a_1 = a_1 |_{\partial M} = 0$. Since these forms $\{\theta_i \},
  \{\theta_i^H \}$ are closed, the difference
  \[ \int_M \theta_1 \wedge \cdots \wedge \theta_n - \int_M \theta_1^H \wedge
     \cdots \wedge \theta_n^H \]
  by Stokes theorem can be transformed into integrals on the boundary. Every
  one of those integrals contains either $i^{\ast} \theta_1^H$ or $i^{\ast}
  a_1$, therefore vanishes. So
  \[ \int_M \theta_1^H \wedge \cdots \wedge \theta_n^H = 1. \]
  This says that $\{\theta_i^H \}$ are non-trivial and form a parallel basis
  for the cotangent bundle $T^{\ast} M$. So $M$ is flat and this contradiction
  with our initial assumption finishes our proof.
\end{proof}

\subsection{Non-existence results}

We now assume on the contrary that $M = (\mathbb{T}^{n - 1} \times [0, 1])
\#M_0$ admits a metric $g$ with $R_g \geqslant 0$ and $H_g \geqslant 0$. By
previous lemmas, we can further assume that $R_g$ is strictly positive.

Before we get to the proof, recall that a well know duality result in
algebraic topology

\begin{theorem}
  \label{poincare-lefschetz}(Poincare-Lefschetz duality) Let $M$ be a manifold
  with boundary with fundamental class $z \in H_n (M, \partial M)$. Then the
  duality maps
  \begin{equation}
    D : H^k (M) \to H_{n - k} (M, \partial M) \label{lefschetz1}
  \end{equation}
  and
  \begin{equation}
    D : H^k (M, \partial M) \to H_{n - k} (M) \label{lefschetz2}
  \end{equation}
  given by taking cap product with $z$ are both isomorphisms.
\end{theorem}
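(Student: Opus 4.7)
The plan is to deduce Poincar\'e--Lefschetz duality from ordinary Poincar\'e duality for closed manifolds by the classical doubling construction, combined with a five-lemma argument on Mayer--Vietoris ladders. First I would form the double $DM := M \cup_{\partial M} M$, obtained by gluing two copies of $M$ along their common boundary via the identity. Since $M$ is orientable with fundamental class $z \in H_n(M,\partial M)$, the double $DM$ inherits an orientation; its fundamental class $[DM]$ is represented by the sum of the two relative cycles representing $z$, their common boundary cancelling. Classical Poincar\'e duality then gives isomorphisms $\cdot \cap [DM] : H^k(DM) \to H_{n-k}(DM)$, and Poincar\'e duality for the closed $(n{-}1)$-manifold $\partial M$ gives $\cdot \cap [\partial M] : H^{k-1}(\partial M) \to H_{n-k}(\partial M)$.

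Next I would choose open thickenings $U_1, U_2 \subset DM$ of the two copies of $M$, so that each $U_i$ deformation retracts to $M$ and $U_1 \cap U_2$ deformation retracts to $\partial M$. The Mayer--Vietoris sequences for this cover yield long exact sequences in cohomology
\[
\cdots \to H^{k-1}(\partial M) \to H^k(DM) \to H^k(M)\oplus H^k(M) \to H^k(\partial M) \to \cdots
\]
and, dually, in homology. Cap product with $[DM]$ (restricted to the appropriate subspaces so that it becomes cap with $z$ on each $U_i$ and cap with $[\partial M]$ on $U_1\cap U_2$) produces a vertical chain map between these two sequences, giving a commutative ladder in which the outer vertical arrows on the $DM$ and $\partial M$ entries are already known to be isomorphisms. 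The five lemma then forces $\cdot \cap z : H^k(M) \to H_{n-k}(M,\partial M)$ to be an isomorphism, establishing \eqref{lefschetz1}.

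For the second isomorphism \eqref{lefschetz2}, I would run an analogous ladder argument, but now using the long exact sequences of the pair $(M,\partial M)$ in cohomology and in homology,
\[
\cdots \to H^k(M,\partial M) \to H^k(M) \to H^k(\partial M) \to H^{k+1}(M,\partial M) \to \cdots,
\]
compared via cap product with $z$ (which on the $\partial M$ entries specialises, up to sign, to cap with $\partial z = [\partial M]$). The first isomorphism \eqref{lefschetz1} just established, together with Poincar\'e duality on $\partial M$, supplies all but one of the vertical arrows in the ladder as isomorphisms, and a second application of the five lemma yields \eqref{lefschetz2}.

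The main obstacle is verifying commutativity of the ladder squares containing the connecting homomorphisms: one must check at the chain level that the Mayer--Vietoris (respectively long-exact-sequence) coboundary intertwines with cap product with $z$ and the homology connecting map, up to the expected sign. This is a standard but delicate computation using barycentric subdivision to ensure that representative cochains are supported in one piece of the open cover, together with the Leibniz rule $\partial(\alpha \cap z) = (-1)^{|\alpha|}(\delta\alpha \cap z - \alpha \cap \partial z)$ for the cap product. Once this naturality is in hand, both isomorphisms follow uniformly from the five lemma.
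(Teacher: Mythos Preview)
Your proposal outlines a correct and standard proof of Poincar\'e--Lefschetz duality; the doubling-plus-five-lemma strategy you describe is essentially the argument given in, e.g., Hatcher's \emph{Algebraic Topology}, \S3.3, and the technical point you flag (naturality of cap product with respect to connecting homomorphisms) is exactly the delicate step. However, note that the paper does not actually give a proof of this theorem: it is quoted as a well-known background result from algebraic topology, stated without proof and used only as a tool in the subsequent arguments. So there is no ``paper's own proof'' to compare against---your write-up is simply supplying a proof where the paper chose to cite the result.
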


\subsection{Proof of Theorem \ref{thm:non-existence}}

Now we are ready to finish the proof of Theorem \ref{thm:non-existence}.

\begin{proof}[Proof of Theorem \ref{thm:non-existence}.] There exists a map $\pi
: M \rightarrow \mathbb{T}^{n - 1} \times [0, 1]$ we choose standard
normalized forms $\mathrm{d} x^i$ and its pull back $\theta_i := \pi^{\ast}
\mathrm{d} x^i$ to $M$. The fundamental class $z \in H_n (M, \partial M,
\mathbb{Z})$, then the cap product $[\theta_n] \frown z$ is in $H_{n - 1} (M,
\partial M, \mathbb{Z})$ by Lefschetz duality and nonzero. We have the
following minimization procedure,
\begin{equation}
  | \Sigma | = \min \{ | \Sigma_0 | : \Sigma_0 \in H_{n - 1} (M, \partial M,
  \mathbb{Z}) \}
\end{equation}
and
\begin{equation}
  \int_{\Sigma} \theta_1 \wedge \theta_2 \cdots \wedge \theta_{n - 1} = 1.
\end{equation}
Note that $\Sigma$ is area minimizing, hence stable. When $4 \leqslant n < 7$,
the corresponding Rayleigh quotient, the stability inequality
{\eqref{stability-use}} and Lemma \ref{lm:positive minimal} give a metric
$\hat{g}$ on $\Sigma$ such that $R_{\hat{g}} > 0$ and $H_{\hat{g}} = 0$.

Hence we have a $(n - 1)$ dimensional manifold $(\Sigma^{n - 1}, \hat{g})$
whose scalar curvature is positive and boundary is minimal together with forms
$\theta_1, \cdots, \theta_{n - 1}$.

Inductively doing this, we get down to a compact oriented surface $\Sigma^2$
with at least two boundary components. This will lead to $\chi (\Sigma^2)
\leqslant 0$. However, by taking $\varphi \equiv 1$ in the stability
inequality {\eqref{stability-use}},
\[ 2 \pi \chi (\Sigma^2) = \int_{\Sigma^2} \frac{1}{2} R_{\Sigma^2} +
   \int_{\partial \Sigma^2} H_{\partial \Sigma^2} > 0 \]
gives positive Euler characteristic. This is a contradiction. When $n = 3$, we
get directly a surface $\Sigma^2$ with boundary and we apply stability
{\eqref{stability-use}} by inserting $\varphi \equiv 1$ directly.
\end{proof}

\subsection{Another proof of Theorem \ref{thm:non-existence}}
\begin{proof}[Gromov and Lawson's proof.]
In our last proof, we make use of {\eqref{lefschetz1}} of Poincare-Lefschetz
duality (Theorem \ref{poincare-lefschetz}). Also note that by Lemma \ref{mean
convex boundary deformation}, it is possible to deform the manifold $M$ into a
scalar-flat manifold whose mean curvature of the boundary is strictly
positive. In fact, we can give another proof using the other part of the
duality {\eqref{lefschetz2}}.

As in Lemma \ref{notflat}, we have that $[\theta^i] \in H^1 (M, \mathbb{Z})$
for $2 \leqslant i \leqslant n$ and $\theta_1$ is a representative element of
a relative cohomology class in $H^1 (M, \partial M, \mathbb{Z})$. Then
$[\theta_1] \frown z \in H_{n - 1} (M, \mathbb{Z})$, by the same reasoning,
$[\theta_1] \frown z$ is not trivial. We then have the following minimization
procedure,
\begin{equation}
  | \Sigma | = \min \{ | \Sigma_0 | : \Sigma_0 \in H_{n - 1} (M, \mathbb{Z})
  \}
\end{equation}
and
\begin{equation}
  \int_{\Sigma} \theta_2 \wedge \cdots \wedge \theta_n = 1.
\end{equation}
The existence of a minimizer is due to nontriviality of $H_{n - 1} (M,
\mathbb{Z})$ and that two boundaries of $M$ has strictly positive mean
curvature thus acting as barriers. Then $\Sigma$ has no boundary and does not
intersect the boundary $\partial M$.

{\itshape{Case}} $n = 3$:

By construction, on $\Sigma$ we have two closed 1-forms $\theta_2, \theta_3$
such that $\Sigma$ is dual to $\theta_2 \wedge \theta_3$ and that
\[ \int_{\Sigma} \theta_2 \wedge \theta_3 = 1. \]
This fact leads to that $\Sigma$ has positive genus. Because otherwise
$\Sigma$ was a 2-sphere, the closed 1-forms $\theta_2, \theta_3$ would have to
be exact by cohomology of the 2-sphere. But then by the Stokes theorem,
$\int_{\Sigma} \theta_2 \wedge \theta_3 = 0$.

If $n = 3$ by the stability inequality {\eqref{stability-use}}, we also have
that the Euler characteristic
\begin{equation}
  \chi (\Sigma^2) = \int_{\Sigma} \frac{1}{2} R_{\Sigma} + \int_{\partial
  \Sigma} H_{\partial \Sigma} = \frac{1}{2} \int_{\Sigma} R_{\Sigma} > 0
\end{equation}
gives that $\Sigma^2$ has zero genus. This is a contradiction.

{\itshape{Case}} $4 \leqslant n < 8$:

Since $\Sigma$ has no boundary, we consider the eigenvalue problem
\begin{equation}
  L_{\Sigma} \phi = \sigma \phi \quad \text{in } \Sigma .
\end{equation}
Let the first eigenfunction be $u$. The function $u$ is positive on $\Sigma$.

Using the Rayleigh quotient
\begin{equation}
  \sigma_1 = \frac{\int_{\Sigma} (| \nabla u |^2 + c_{n - 1} R_{\Sigma}
  u^2)}{\int_{\Sigma} u^2}, \label{rayleigh-quotient-conformal-2}
\end{equation}
the stability {\eqref{stability-use}} (without integrals over the boundary)
and that $c_{n - 1} < \frac{1}{2}$, we see that $\sigma_1 > 0$.

Let $\hat{g} = u^{\frac{4}{n - 3}} g_{\Sigma}$ (note here that the dimension
of $\Sigma$ is $n - 1$) with this conformal change we have a closed
$2$-surface $(\Sigma, \hat{g})$, whose scalar curvature is positive with forms
$\theta_2, \cdots, \theta_n$. This is the standard case. See for example the
earlier work of Schoen and Yau on positive scalar curvature
{\cite{schoen-structure-1979}}.

Inductively doing this, we get down to a compact oriented surface $\Sigma^2$
just like when $n = 3$.
\end{proof}
\subsection{Relation with positive mass theorem}

We recall the density theorem {\cite[Proposition 4.1]{almaraz-positive-2016}}:

\begin{proposition}
  Given any asymptotically flat manifold $(M^n, g)$ with a noncompact
  boundary, given $\epsilon > 0$, there exists metric $\tilde{g}$ such that

  1. $(M^n, \tilde{g})$ is asymptotically flat.

  2. $(M^n, \tilde{g})$ satisfies $R_{\tilde{g}} = 0$ and $H_{\tilde{g}} = 0$.

  3. $\tilde{g}$ is conformally flat near infinity i.e. $\tilde{g}$ is of the
  form $u^{\frac{4}{n - 2}} \delta$ near infinity with $\Delta u = 0$ in
  $\mathbb{R}_+^n$ and $\frac{\partial u}{\partial x_1} = 0$ on
  $\mathbb{\partial R}_+^n$ for $| x |$ large.

  4. $| m_g - m_{\tilde{g}} | < \epsilon$.
\end{proposition}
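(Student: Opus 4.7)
The plan is to produce $\tilde{g}$ in two stages: first, truncate $g$ so it becomes Euclidean outside a large coordinate half-ball; then correct by a global conformal factor to restore $R = 0$ and $H = 0$ while keeping the change in mass small. For $\sigma$ large, let $\chi_\sigma$ be a smooth radial cutoff equal to $1$ on $\{|x| \le \sigma\}$ and $0$ on $\{|x| \ge 2\sigma\}$, and in the end chart set $g_\sigma := \chi_\sigma g + (1-\chi_\sigma)\delta$, extended by $g$ on the compact core. The decay hypothesis with $\tau > (n-2)/2$ forces $R_{g_\sigma}$ and $H_{g_\sigma}$, supported in the transition annulus $\{\sigma \le |x| \le 2\sigma\} \cap \mathbb{R}^n_+$, to tend to zero in weighted Hölder norms $C^{0,\alpha}_{\gamma-2}$ and $C^{1,\alpha}_{\gamma-1}$ respectively for a suitable weight $\gamma \in (2-n, 0)$. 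Moreover $g_\sigma = \delta$ outside $\{|x| \le 2\sigma\}$, which already achieves conformal flatness at infinity for $g_\sigma$ itself.

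For the conformal correction, consider the mixed boundary-value problem $L_{g_\sigma} u = 0$ in $M$ and $B_{g_\sigma} u = 0$ on $\partial M$ with $u \to 1$ at infinity. Writing $u = 1 + v$, the function $v$ solves $L_{g_\sigma} v = -c_n R_{g_\sigma}$ in $M$ and $B_{g_\sigma} v = -2c_n H_{g_\sigma}$ on $\partial M$ with $v \to 0$ at infinity. For $\sigma$ large, $(L_{g_\sigma}, B_{g_\sigma})$ is a small perturbation of the Euclidean Laplacian on $\mathbb{R}^n_+$ paired with the Neumann condition on $\partial \mathbb{R}^n_+$, and Fredholm theory on weighted Hölder spaces (as in \cite[Proposition 3.3]{almaraz-positive-2016}) provides a unique solution $v \in C^{2,\alpha}_{\gamma}$ whose $C^0$ norm is controlled by the weighted norms of the sources, hence arbitrarily small as $\sigma \to \infty$. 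In particular $u = 1 + v > 0$, and setting $\tilde{g} := u^{4/(n-2)} g_\sigma$ the conformal transformation laws reviewed in Section \ref{fbmh} give $R_{\tilde{g}} = 0$ and $H_{\tilde{g}} = 0$. Outside $\{|x| \le 2\sigma\}$ we have $g_\sigma = \delta$ while $u$ is harmonic in $\mathbb{R}^n_+$ with zero Neumann data on $\partial \mathbb{R}^n_+$; even reflection across $\{x_1 = 0\}$ to a harmonic function on $\mathbb{R}^n$, combined with the standard expansion of harmonic functions, then yields $u = 1 + A|x|^{2-n} + O(|x|^{1-n})$, proving property 3.

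For the mass estimate, one first observes $m_{g_\sigma} \to m_g$ as $\sigma \to \infty$: on $\{|x| \le \sigma\}$ the mass integrand of $g_\sigma$ agrees with that of $g$, and the annular contribution is $o(1)$ thanks to the decay rate $\tau > (n-2)/2$. Next, comparing $m_{\tilde{g}}$ with $m_{g_\sigma}$ via the conformal mass formula at infinity (integration by parts of $\Delta u$ against the cutoff on coordinate hemispheres, with the boundary integral along $\partial \mathbb{R}^n_+$ vanishing by the Neumann condition) expresses $m_{\tilde{g}} - m_{g_\sigma}$ as a dimensional multiple of the coefficient $A$ in the expansion of $u$. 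The coefficient $A$ is in turn controlled by $\|R_{g_\sigma}\| + \|H_{g_\sigma}\|$ in the relevant weighted norms and hence goes to zero. Choosing $\sigma$ large therefore yields $|m_g - m_{\tilde{g}}| < \epsilon$.

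The main technical obstacle is the functional-analytic input: establishing that $(L_{g_\sigma}, B_{g_\sigma})$ is an isomorphism between the weighted Hölder spaces indexed by a weight $\gamma \in (2-n, 0)$ that avoids the indicial roots of the half-space Neumann Laplacian, with operator norm of the inverse bounded uniformly in $\sigma$. This requires a careful separation-of-variables analysis of the model problem on $\mathbb{R}^n_+$ with mixed Dirichlet-at-infinity and Neumann-on-$\partial \mathbb{R}^n_+$ conditions; once it is granted, the quantitative control of $v$, the asymptotic expansion of $u$, and the mass estimate all follow from routine perturbation and boundary-integral computations.
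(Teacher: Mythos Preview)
The paper itself does not prove this proposition; it merely recalls \cite[Proposition~4.1]{almaraz-positive-2016}. Your truncate-then-conformally-correct scheme is indeed the standard strategy used there, but there is a concrete error in the execution. You assert that $R_{g_\sigma}$ and $H_{g_\sigma}$ are ``supported in the transition annulus $\{\sigma\le|x|\le 2\sigma\}$'' and hence tend to zero in the weighted norms. That is false: on the compact core and on $\{|x|\le\sigma\}$ one has $g_\sigma=g$, so $R_{g_\sigma}=R_g$ and $H_{g_\sigma}=H_g$ there, and these need not vanish. Consequently the sources $-c_nR_{g_\sigma}$, $-2c_nH_{g_\sigma}$ in the equation for $v=u-1$ do \emph{not} go to zero as $\sigma\to\infty$; your smallness argument for $\|v\|_{C^0}$ (hence for $u>0$) collapses, and so does the claim that the expansion coefficient $A$, and with it $m_{\tilde g}-m_{g_\sigma}$, tends to zero. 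For the same reason $(L_{g_\sigma},B_{g_\sigma})$ is not globally a small perturbation of the Euclidean Neumann Laplacian: the zeroth-order coefficient $c_nR_g$ on the core is fixed and possibly large.

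The missing ingredient is the hypothesis $R_g\ge 0$, $H_g\ge 0$, tacit here because the proposition is only invoked under the positive-mass assumptions; without it the statement is actually false in general. With the sign conditions, triviality of the kernel of $(L_{g_\sigma},B_{g_\sigma})$ on $C^{2,\alpha}_\gamma$ and positivity of $u$ both follow from the maximum principle, exactly as in Step~4 of Section~\ref{schoen approach}, not from smallness of $v$. For the mass estimate a clean route is to first conformally change $g$ to $\bar g$ with $R_{\bar g}\equiv 0$, $H_{\bar g}\equiv 0$ (this only lowers the mass, which is all the application requires); starting from $\bar g$ your annulus-support claim becomes correct and the remainder of your argument then goes through verbatim.
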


We further modify $\tilde{g}$ as follows

\begin{proposition}
  Suppose that $M$ is given as above, if $m_g < 0$, we can deform $\tilde{g}$
  into $\bar{g}$ such that

  1. $(M, \bar{g})$ is scalar flat with zero mean curvature boundary:
  $R_{\bar{g}} = 0$ and $H_{\bar{g}} = 0$.

  2. $\bar{g}$ is exactly Euclidean outside a compact set.
\end{proposition}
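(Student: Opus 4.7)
The plan is a two-stage deformation: first a radial cutoff of the conformal factor $u$ truncates the asymptotic end, producing an intermediate metric $\hat g$ that is exactly Euclidean outside a compact set and satisfies $R_{\hat g} \ge 0$, $H_{\hat g} \equiv 0$; then a compactly supported non-conformal Lohkamp-type correction restores $R \equiv 0$ everywhere while preserving $H \equiv 0$ and the Euclidean exterior. The strict negativity of the mass enters decisively in the first stage to control the sign of the scalar curvature after cutoff.

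By the previous proposition, I may write $\tilde g = u^{4/(n-2)}\delta$ outside a compact set, with $\Delta u = 0$ on $\mathbb{R}_+^n$, $\partial_1 u = 0$ on $\partial\mathbb{R}_+^n$, and $u \to 1$ at infinity. Reflection across $\{x_1 = 0\}$ extends $u$ to a function harmonic on an exterior domain in $\mathbb{R}^n$, so a standard multipole expansion gives $u(x) = 1 + A|x|^{2-n} + O(|x|^{1-n})$; a direct computation in the ADM formula \eqref{energy-def} identifies $A$ as a positive dimensional multiple of $m_{\tilde g}$. Choosing the density-theorem parameter so that $|m_{\tilde g} - m_g| < |m_g|/2$ ensures $m_{\tilde g} < 0$, hence $A < 0$. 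Now fix large radii $R_1 < R_2$ and a smooth radial cutoff $\chi(|x|)$ with $\chi \equiv 1$ on $\{|x| \le R_1\}$ and $\chi \equiv 0$ on $\{|x| \ge R_2\}$. Set $\bar u := 1 + \chi(u-1)$ and define $\hat g$ by gluing $\tilde g$ on $\{|x| \le R_1\}$ with $\bar u^{4/(n-2)}\delta$ on $\{|x| \ge R_1\}$; the two pieces agree smoothly where $\chi \equiv 1$ and $\bar u = u$. Since $\chi$ is radial, $\partial_1 \bar u = 0$ on $\{x_1 = 0\}$, so $H_{\hat g} \equiv 0$; and on $\{|x| \ge R_2\}$, $\bar u \equiv 1$ so $\hat g = \delta$ exactly.

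The key computation is
\[
  \Delta \bar u = (\Delta\chi)(u-1) + 2\nabla\chi\cdot\nabla u,
\]
whose leading contribution, from $u - 1 \approx A|x|^{2-n}$, equals $A\bigl[\chi''(r)\, r^{2-n} - (n-3)\chi'(r)\, r^{1-n}\bigr]$ with subleading error of order $R_1^{-n-1}$. Choosing $\chi$ so that $r^{3-n}\chi'(r)$ is nondecreasing (for example $\chi'(r) \propto -r^{n-3}$ in the middle of the annulus, smoothly cut off near the endpoints) forces the bracket to be nonnegative; combined with $A < 0$ this makes the leading part of $\Delta\bar u$ nonpositive and strictly negative on an open subset, and taking $R_1$ large enough absorbs the error. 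Hence $R_{\hat g} \ge 0$, with $R_{\hat g} > 0$ on an open subset of the annulus.

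Finally, to upgrade $\hat g$ to a metric $\bar g$ with $R_{\bar g} \equiv 0$, $H_{\bar g} \equiv 0$, and the Euclidean exterior preserved, I apply the Fischer--Marsden / Lohkamp compactly supported deformation, adapted to the boundary setting. The linearization of the constraint map $g \mapsto (R_g, H_g|_{\partial M})$ at $\hat g$ admits a right inverse on data supported inside the open set $\{R_{\hat g} > 0\} \subset \{R_1 < |x| < R_2\}$, which is disjoint from both $\partial M$ and $\{|x| \ge R_2\}$; an implicit function argument then yields a symmetric two-tensor $h$ with $\operatorname{supp} h$ compact in that open set and with $R_{\hat g + h} \equiv 0$, $H_{\hat g + h} \equiv 0$. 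Then $\bar g := \hat g + h$ is the desired metric. The main obstacle is this last step: establishing the simultaneous interior--boundary constraint right-invertibility with compactly supported solutions in our half-space setting, which requires localizing the correction strictly away from both the boundary and the Euclidean region and verifying the Fredholm properties of the linearized constraints in this boundary-adapted framework.
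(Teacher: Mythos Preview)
Your overall strategy matches the paper's, which simply invokes Lohkamp's construction and records that the functions involved satisfy $\partial_1(\cdot)=0$ on $\partial\mathbb{R}^n_+$, so that the boundary mean curvature stays zero. However, there are two problems.

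First, a genuine gap in Stage~1: the radial cutoff $\bar u = 1 + \chi(|x|)(u-1)$ cannot produce $R_{\hat g} \ge 0$. You need the bracket $\chi'' - (n-3)r^{-1}\chi' = r^{n-3}\bigl(r^{3-n}\chi'\bigr)'$ to be nonnegative on $[R_1,R_2]$, i.e.\ $r^{3-n}\chi'$ nondecreasing there. But $\chi\equiv 1$ near $R_1$ and $\chi\equiv 0$ near $R_2$, so $r^{3-n}\chi'$ vanishes at both endpoints and is strictly negative in between; it must therefore strictly decrease on some subinterval, making the bracket negative and the leading term of $\Delta\bar u$ strictly positive, hence $R_{\hat g} < 0$ on an open set. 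This is a leading-order failure and enlarging $R_1$ does not help. Lohkamp's actual device is to cut off in the \emph{range} of $u$: since $A<0$ one has $u<1$ and $u\to 1$ at infinity; set $\bar u=\phi(u)$ with $\phi$ smooth, concave, $\phi(t)=t$ for $t\le 1-2\varepsilon$ and $\phi\equiv 1-\varepsilon$ for $t\ge 1-\varepsilon$. Then $\Delta\bar u=\phi''(u)\,|\nabla u|^2\le 0$ wherever $u$ is harmonic, so $R\ge 0$ (strict on the transition set), $\bar u\equiv 1-\varepsilon$ for large $|x|$ (rescale to get exact $\delta$), and $\partial_1\bar u=\phi'(u)\,\partial_1 u=0$ on the boundary.

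Second, your Stage~2 is unnecessary. The conclusion ``$R_{\bar g}=0$'' in the proposition is evidently a slip: Lohkamp's Proposition~6.1 (which the paper's proof cites as carrying over verbatim) yields only $R_{\bar g}\ge 0$ with strict inequality somewhere, and this is precisely what the paper uses in the very next paragraph (``$R_M\ge 0$ \ldots and at some point $R_M>0$''). So the Corvino--Fischer--Marsden step you identify as the ``main obstacle'' simply does not arise; Stage~1, done correctly, already delivers what is needed.
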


\begin{proof}
  The proof of the case of the standard asymptotically flat manifold as in
  {\cite[Proposition 6.1]{lohkamp-scalar-1999}} carries over. It is sufficient
  to take \ into consideration the fact that the functions $h$ and $f$
  constructed in {\cite[Lemma 6.2]{lohkamp-scalar-1999}} satisfy
  $\frac{\partial h}{\partial x_1} = \frac{\partial f}{\partial x_1} = 0$
  along the boundary.
\end{proof}

\begin{proof}[Lohkamp style proof of the positive mass theorem Theorem \ref{positive mass theorem}.] We assume on the
contrary that $m_g < 0$. By the last two propositions, we modify the metric
$g$ into $\bar{g}$. We take a large $\Lambda > 0$ such that the region
$\left\{ x \in M : \hspace{1em} | x_i | \geqslant \Lambda \right\}$ is
Euclidean, we identify $\{ x_i = \Lambda \}$ and $\{ x_i = - \Lambda \}$ for
all $2 \leqslant i < n$ and then cut off the region outside $\{x_1 >
\Lambda\}$, we obtain a compact manifold $M$ with boundary $\partial M$(with
two components at least) with $R_M \geqslant 0$ and $H_{\partial M} \geqslant
0$, and at some point $R_M > 0$. Then we see that this contradicts the
non-existences results of Theorem \ref{thm:non-existence}. Hence, we have yet
another proof of the positive mass theorem. For the rigidity statement, see
the article {\cite[Lemma 4.3,
4.4]{almaraz-positive-2016}}.
\end{proof}

\appendix\section{Details of Computations}

\subsection{Second variation of minimal
hypersurfaces with free boundary}

\begin{proof}[Proof of Theorem \ref{2nd variation}\ ]Let $F = F (x, t) :
\Sigma \times (- \varepsilon, \varepsilon) \to M$ be a 1-parameter family of
diffeomorphisms of $\Sigma$ induced by $X$. We consider coordinates $x^i$ near
a point $p \in \Sigma$, let
\[ e_i = d F \left( \frac{\partial}{\partial x^i} \right), X = d F \left(
   \frac{\partial}{\partial t} \right) . \]
We can further assume that $x^i$ form a normal coordinate system at $p \in
\Sigma$, hence
\[ g_{i j} (p, 0) = \delta_{i j} \text{ and } \nabla_{e_i} e_j (p, 0) = 0. \]
Now we calculate the variation of $d \tmop{vol}_{\Sigma}$. First, under local
coordinates, we have
\begin{equation}
  \partial_t \sqrt{g} = g^{i j} \langle \nabla_{e_i} X, e_j \rangle \sqrt{g}
  \label{var-vol} .
\end{equation}
We calculate the variation $\partial_t g^{i j}$ and $\partial_t \langle
\nabla_{e_i} X, e_j \rangle$.

\begin{align}
  \partial_t g^{i j} & = - g^{i l} g^{j k} \partial g_{k l}\\
  & = - \partial_t g_{i j}\\
  & = - \langle \nabla_X e_i, e_j \rangle - \langle \nabla_X e_j, e_i
  \rangle\\
  & = - \langle \nabla_{e_i} X, e_j \rangle - \langle \nabla_{e_j} X, e_i
  \rangle \label{var-inverse-metric}
\end{align}
evaluated at $(p, 0)$. And similarly
\begin{align*}
  \partial_t \langle \nabla_{e_i} X, e_j \rangle & = \langle \nabla_X
  \nabla_{e_i} X, e_j \rangle + \langle \nabla_{e_i} X, \nabla_X e_j \rangle\\
  & = \langle R (X, e_i) X, e_j \rangle + \langle \nabla_{e_i} X,
  \nabla_{e_j} X \rangle + \langle \nabla_{e_i} \nabla_X X, e_j \rangle .
\end{align*}
Hence we have, evaluating at $(p, 0)$
\begin{align*}
  F_X \assign \partial_t^2 \sqrt{g} & = - [\langle \nabla_{e_i} X, e_j \rangle
  + \langle \nabla_{e_j} X, e_i \rangle] \langle \nabla_{e_i} X, e_j \rangle\\
  & \hspace{1em} + \langle R (X, e_i) X, e_i \rangle + \langle \nabla_{e_i}
  X, \nabla e_i X \rangle + \langle \nabla_{e_i} \nabla_X X, e_i \rangle\\
  & \hspace{1em} + \langle \nabla_{e_i} X, e_i \rangle \langle \nabla_{e_j}
  X, e_j \rangle\\
  & = \langle R (X, e_i) X, e_i \rangle + \langle \nabla_{e_i} X, \nu \rangle
  \langle \nabla_{e_j} X, \nu \rangle\\
  & \hspace{1em} + (\tmop{div} X)^2 + \tmop{div} Z - \langle \nabla_{e_j} X,
  e_i \rangle \langle \nabla_{e_i} X, e_j \rangle .
\end{align*}
Let $X = T + \varphi \nu$ and $Z = \nabla_X X = \hat{Z} + \phi \nu$, since
$\Sigma$ is minimal, we have that $\tmop{div} (\chi \nu) = 0$ for any function
$\chi$, so $\tmop{div} X = \tmop{div} T$ and $\tmop{div} Z = \tmop{div}
\hat{Z}$. Calculating term by term
\[ \langle R (X, e_i) X, e_i \rangle = \langle R (T, e_i) T, e_i \rangle + 2
   \varphi \langle R (T, e_i) \nu, e_i \rangle - \varphi^2 \tmop{Ric} (\nu) \]
and

\begin{align*}
  \langle \nabla_{e_i} X, \nu \rangle \langle \nabla_{e_i} X, \nu \rangle & =
  \langle \nabla_{e_i} (T + \varphi \nu), \nu \rangle \langle \nabla_{e_i} (T
  + \varphi \nu), \nu \rangle\\
  & = [\langle \nabla_{e_i} T, \nu \rangle + \nabla_{e_i} \varphi] [\langle
  \nabla_{e_i} T, \nu \rangle + \nabla_{e_i} \varphi]\\
  & = (B (T, e_i))^2 - 2 B (T, \nabla \varphi) + | \nabla \varphi |^2
\end{align*}
and
\begin{align*}
  & - \langle \nabla_{e_j} X, e_i \rangle \langle \nabla_{e_i} X, e_j
  \rangle\\
  = & - \langle \nabla_{e_j} (T + \varphi \nu), e_i \rangle \langle
  \nabla_{e_i} (T + \varphi \nu), e_j \rangle\\
  = & - [\langle \nabla_{e_j} T, e_i \rangle + \varphi \langle \nabla_{e_j}
  \nu, e_i \rangle] [\langle \nabla_{e_i} T, e_j \rangle + \varphi \langle
  \nabla_{e_i} \nu, e_j \rangle]\\
  = & - \langle \nabla_{e_j} T, e_i \rangle \langle \nabla_{e_i} T, e_j
  \rangle - \varphi^2 B (e_i, e_j)^2 - 2 \varphi \langle \nabla_{e_i} T, e_j
  \rangle B (e_i, e_j) .
\end{align*}
The density of the second variation can therefore be written
\[ F_X = - \varphi^2 \tmop{Ric} (\nu) - \varphi^2 | B |^2 + | \nabla \varphi
   |^2 + G \]
where
\begin{eqnarray*}
  G & = & [\langle R (T, e_i) T, e_i \rangle + (\tmop{div} X)^2 + B (T, e_i)^2
  - \langle \nabla_{e_i} T, e_j \rangle \langle \nabla_{e_j} T, e_i \rangle]\\
  &  & + \tmop{div} \hat{Z} + [2 \varphi \langle R (T, e_i) \nu, e_i \rangle
  - 2 B (T, \nabla \varphi) - 2 \varphi \langle \nabla_{e_i} T, e_j \rangle B
  (e_i, e_j)]\\
  & = & \tmop{div} (T \tmop{div} T - \nabla_T T) + \tmop{div} \hat{Z} - 2
  (\varphi B_{i j} T_i)_{; j}
\end{eqnarray*}
The last line is the consequence of the following two identities.
\begin{align}
  & \tmop{div} (T \tmop{div} T - \nabla_T T)\\
  = & \langle R (T, e_i) T, e_i \rangle + (\tmop{div} X)^2 + B (T, e_i)^2 -
  \langle \nabla_{e_i} T, e_j \rangle \langle \nabla_{e_j} T, e_i \rangle
  \label{1stdiv}
\end{align}
and
\begin{align}
  & - 2 (\varphi B_{i j} T_i)_{; j}\\
  = & 2 \varphi \langle R (T, e_i) \nu, e_i \rangle - 2 B (T, \nabla \varphi)
  - 2 \varphi \langle \nabla_{e_i} T, e_j \rangle B (e_i, e_j) \label{2div} .
\end{align}
Indeed, we calculate as follows
\begin{align}
  & \tmop{div} (T \tmop{div} T - \nabla_T T)\\
  = & (\tmop{div} T)^2 + \nabla_T \langle \nabla_{e_i} T, e_i \rangle -
  \tmop{div} (\nabla_T T)\\
  = & (\tmop{div} T)^2 + \langle \nabla_T \nabla_{e_i} T, e_i \rangle -
  \langle \nabla_{e_i} \nabla_T T, e_i \rangle + \langle \nabla_{e_i} T,
  \nabla_T e_i \rangle\\
  = & (\tmop{div} T)^2 + \langle R (T, e_i) T, e_i \rangle + \langle
  \nabla_{[T, e_i]} T, e_i \rangle + \langle \nabla_{e_i} T, e_j \rangle
  \langle \nabla_T e_i, e_j \rangle\\
  & + \langle \nabla_{e_i} T, \nu \rangle \langle \nabla_T e_i, \nu \rangle\\
  = & (\tmop{div} T)^2 + \langle R (T, e_i) T, e_i \rangle + \langle \nabla_T
  e_i, e_j \rangle \langle \nabla_{e_j} T, e_i \rangle\\
  & - \langle \nabla_{e_i} T, e_j \rangle \langle \nabla_{e_j} T, e_i \rangle
  + \langle \nabla_{e_i} T, e_j \rangle \langle \nabla_T e_i, e_j \rangle +
  \langle \nabla_{e_i} T, \nu \rangle \langle \nabla_T e_i, \nu \rangle\\
  = & \langle R (T, e_i) T, e_i \rangle + (\tmop{div} X)^2 + B (T, e_i)^2 -
  \langle \nabla_{e_i} T, e_j \rangle \langle \nabla_{e_j} T, e_i \rangle .
  \label{1divproof}
\end{align}
And
\begin{align}
  - 2 (\varphi B_{i j} T_i)_{; j} & = - 2 \varphi_{; j} B_{i j} T_i - 2
  \varphi B_{i j} T_{i ; j} - 2 \varphi B_{i j ; j} T_i\\
  & = - 2 \varphi_{; j} B_{i j} T_i - 2 \varphi B_{i j} T_{i ; j} - 2 \varphi
  R (T, e_j, e_j, \nu) \label{2divproof}
\end{align}
where we use the Gauss-Codazzi equation
\[ B_{i j ; j} = B_{i j ; j} - \nabla_i H_{} = B_{i j ; j} - B_{j j ; i} = R
   (e_i, e_j, e_j, \nu) \]
and the minimality of $\Sigma$.\hspace*{\fill}
\end{proof}
\subsection{Decay estimates of Minimal surface equations}
\begin{lemma}
  \label{decay calculation}Suppose outside a large enough compact set $K$, $f$
  satisfies the minimal surface equation
  \begin{equation}
    \sum_{i j} \left( \delta_{i j} - \frac{f_{, i} f_{, j}}{1 + | \partial f
    |^2} \right) f_{, i j} + \frac{2 (n - 1)}{n - 2} \sqrt{1 + | \partial f
    |^2} \frac{\partial}{\partial \nu_0} \log h = 0
  \end{equation}
  and on the boundary $\partial \Sigma \sim K$,
  \begin{equation}
    \partial_1 f = 0
  \end{equation}
  with the decay
  \[ |f| + |x' | | \partial f| + |x' |^2 | \partial^2 f| = O (|x' |^{-
     \alpha}) \]
  where $0 < \alpha < 1$. The we can improve this decay rate to
  \[ f (x') = a_0 + a_1 \log |x' | + O (|x' |^{- 1 + \varepsilon}) \]
  if $n = 3$; and
  \[ f (x') = f = a_0 + a_1 |x' |^{3 - n} + O (|x' |^{2 - n + \varepsilon}) \]
  if $n \geqslant 4$.
\end{lemma}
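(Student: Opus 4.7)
The plan is to view the minimal surface equation outside a large compact set as a Neumann boundary value problem on the half-space $\mathbb{R}^{n-1}_+ = \{x_1 \geq 0\}$ for a small perturbation of the Laplacian, and to extract the leading asymptotic behaviour via a Green representation combined with a bootstrap on the decay rate.

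First, I rewrite the equation schematically as $\Delta f = E[f]$, where
\[ E[f] = \frac{f_{,i} f_{,j}}{1+|\partial f|^2}\, f_{,ij} - \frac{2(n-1)}{n-2}\sqrt{1+|\partial f|^2}\,\frac{\partial}{\partial \nu_0}\log h. \]
Using the a priori decay $|f| + |x'||\partial f| + |x'|^2|\partial^2 f| = O(|x'|^{-\alpha})$ together with the expansion $h = 1 + C(n) m_g |x|^{2-n} + O(|x|^{1-n})$ and the fact that $|x| = |x'|(1 + o(1))$ along the graph, the nonlinear piece of $E[f]$ is $O(|x'|^{-3\alpha-4})$ while the inhomogeneous piece is $O(|x'|^{1-n})$. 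In particular $E[f] = O(|x'|^{-\beta})$ with $\beta = \min(3\alpha + 4, n-1) > 2$, which is the only fact I need to run the integral representation.

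Second, I exploit the Neumann condition $\partial_1 f = 0$ either by even-reflecting $f$ across $\{x_1 = 0\}$ to a $C^{2,\alpha}$ function on $\mathbb{R}^{n-1}$ satisfying a global Poisson equation, or equivalently by working on the half-space with the Neumann Green function $G_N(x',y') = \Phi(x'-y') + \Phi(x'-(y')^*)$, where $\Phi$ is the fundamental solution of the $(n-1)$-dimensional Laplacian and $(y')^*$ is the reflection of $y'$ across $\{x_1 = 0\}$. This yields
\[ f(x') = a_0 + \int_{\mathbb{R}^{n-1}_+} G_N(x',y')\, E[f](y')\, dy', \]
with $a_0$ the constant harmonic piece prescribed by the limit of $f$ at infinity. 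A multipole expansion $\Phi(x'-y') = \Phi(x') + O(|x'|^{2-n})$ on the region $\{|y'| \leq |x'|/2\}$ extracts the leading contribution $\Phi(x') \int E[f]\, dy'$, producing $a_1 |x'|^{3-n}$ when $n \geq 4$ and $a_1 \log|x'|$ when $n = 3$. The complementary regions $\{|y'| \sim |x'|\}$ and $\{|y'| \geq 2|x'|\}$ are handled directly from the pointwise decay of $E[f]$ and contribute errors of order $|x'|^{2-n+\varepsilon}$ (respectively $|x'|^{-1+\varepsilon}$ when $n=3$).

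Third, I bootstrap. The first pass only produces a remainder $o(|x'|^{3-n})$, but substituting this improved decay back into the nonlinear piece of $E[f]$ sharpens it by a small positive power of $|x'|^{-1}$, and repeating the representation argument a finite number of times yields the claimed sharp remainder. The main obstacle I anticipate is the careful bookkeeping in the multipole expansion, in particular controlling $\int_{|y'| \geq |x'|} |\Phi(x'-y')|\, |E[f](y')|\, dy'$ finely enough to match the desired $|x'|^{2-n+\varepsilon}$ error, which uses both the decay of $E[f]$ and the sharp integrability threshold of $\Phi$ at infinity; the $n=3$ case is the most delicate, since the two-dimensional Newtonian kernel $\log|x'|$ is itself non-integrable, so one has to record the coefficient of the log before bounding the remainder. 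A secondary point is to verify that the reflected data is regular enough for the global representation to apply, but here $\partial_1 f = 0$ already yields $C^1$ matching and the reflected conformal factor preserves both the decay expansion and the pointwise bounds, so no real obstruction arises beyond careful verification.
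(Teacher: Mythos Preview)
Your approach is essentially the same as the paper's: rewrite as $\Delta f = E[f]$, represent via the Neumann Green kernel on the half-space (the paper writes $\Gamma(x,y) = \log|x-y| + \log|x-\tilde y|$ for $n=3$, identical to your $G_N$, and cites \cite[Lemma~A.1]{almaraz-positive-2016} for the analogous step in higher dimensions), split off the harmonic part to extract $a_0 + a_1|x'|^{3-n}$, and bootstrap. One minor slip: $E[f]$ is only defined outside a compact set, so your global representation $f = a_0 + \int G_N\, E[f]$ requires first extending the right-hand side to all of $\mathbb{R}^{n-1}_+$ (the paper does this explicitly before invoking the solvability lemma), and your claim $\beta > 2$ fails at $n=3$ where $n-1=2$, though you already flag that case as delicate and the finer structure of $\partial_{\nu_0}\log h$ in fact gives the extra decay needed there.
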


\begin{proof}
  $f$ satisfies a simpler Poisson equation outside a compact set
  \[ \Delta f = \frac{f_{, i} f_{, j}}{1 + | \partial f|^2} f_{, i j} -
     \frac{2 (n - 1)}{n - 2} \sqrt{1 + | \partial f |^2}
     \frac{\partial}{\partial \nu_0} \log h = : g. \]
  Using the asymptotics of $h$, we see that $g = O (|x' |^{\max \{- 1 - 3
  \alpha, - n + 1 - \alpha\}})$. We extend $g$ to all of $\mathbb{R}_+^{n -
  1}$. Using {\cite[Lemma A.1]{almaraz-positive-2016}}, we can solve the
  following PDE
  \[ \left\{\begin{array}{ll}
       \Delta w = g & \quad \text{in } \mathbb{R}_+^{n - 1},\\
       \partial_1 w = 0 & \quad \text{on } \partial \mathbb{R}_+^{n - 1}
     \end{array}\right. \]
  where $w$ satisfies the bound
  \begin{equation}
    w = O (|x' |^{\max \{1 - 3 \alpha, 3 - n - \alpha\}+ \varepsilon})
    \label{inhomogeneous decay}
  \end{equation}
  with any $\varepsilon > 0$. The we have that $v := f - w$ satisfies for
  large $|x' |$the following PDE
  \[ \left\{\begin{array}{ll}
       \Delta v = 0 & \quad \text{in } \mathbb{R}_+^{n - 1},\\
       \partial_1 v = 0 & \quad \text{on } \partial \mathbb{R}_+^{n - 1} .
     \end{array}\right. \]
  When $4 \leqslant n < 8$, as already proved in {\cite[Section
  5]{almaraz-positive-2016}}, we see that
  \begin{equation}
    v = a_0 + a_1 |x' |^{3 - n} + O (|x' |^{2 - n}) . \label{harmonic decay}
  \end{equation}
  Combing {\eqref{inhomogeneous decay}} and {\eqref{harmonic decay}}, we
  obtain an improved decay rate for $f$ and this decay rate can be further
  improved to decay rates similar to that of $v$ by {\cite[Lemma
  A.1]{almaraz-positive-2016}}, i.e. for any given $\varepsilon > 0$,
  \[ f = a_0 + a_1 |x' |^{3 - n} + O (|x' |^{2 - n + \varepsilon}) .
      \]
  For the dimension three case, we replace the kernel $\Gamma (x, y) := \Gamma
  (x - y)$ in {\cite{meyers-expansion-1963}} by
  \[ \Gamma (x, y) = \log |x - y| + \log |x - \tilde{y} |, \]
  we then can proceed in the same way as in dimensions $4 \leqslant n < 8$,
  and we arrive the decay
  \[ f = a_0 + a_1 \log |x' | + O (|x' |^{- 1 + \varepsilon}) . \]
  Hence we finish proving the decay estimates for $f$.
\end{proof}

\begin{remark}
  Since by construction $f$ is actually bounded, in dimension 3, $a_1$ has to
  be $0$, i.e.
  \[ f = a_0 + O (|x' |^{- 1 + \varepsilon}) . \]
  Moreover, there is a slightly different case handle by Schoen
  {\cite{schoen-uniqueness-1983}}.
\end{remark}


\begin{thebibliography}{ABdL16}
  \bibitem[ABdL16]{almaraz-positive-2016}S{\'e}rgio Almaraz, Ezequiel Barbosa,
  and  Levi~Lopes de~Lima. {\newblock}A positive mass theorem for
  asymptotically flat manifolds with a non-compact boundary.
  {\newblock}{\itshape{Communications in Analysis and Geometry}},
  24(4):673--715, 2016.{\newblock}

  \bibitem[ACG08]{andersson-rigidity-2008}Lars Andersson, Mingliang Cai, and
  Gregory~J.~Galloway. {\newblock}Rigidity and Positivity of Mass for
  Asymptotically Hyperbolic Manifolds. {\newblock}{\itshape{Annales Henri
  Poincar{\'e}}}, 9(1):1--33, feb 2008.{\newblock}

  \bibitem[ADM60]{arnowitt-canonical-1960}R.~Arnowitt, S.~Deser, and
  C.~W.~Misner. {\newblock}Canonical Variables for General Relativity.
  {\newblock}{\itshape{Physical Review}}, 117(6):1595--1602, mar
  1960.{\newblock}

  \bibitem[Bar86]{bartnik-mass-1986}Robert Bartnik. {\newblock}The mass of an
  asymptotically flat manifold. {\newblock}{\itshape{Communications on Pure
  and Applied Mathematics}}, 39(5):661--693, sep 1986.{\newblock}

  \bibitem[CH03]{chrusciel-mass-2003}Piotr~T.~Chru{\'s}ciel  and  Marc
  Herzlich. {\newblock}The mass of asymptotically hyperbolic Riemannian
  manifolds. {\newblock}{\itshape{Pacific journal of mathematics}},
  212(2):231--264, 2003.{\newblock}

  \bibitem[EHLS15]{eichmair-spacetime-2015}Michael Eichmair, Lan-Hsuan Huang,
  Dan~A.~Lee, and  Richard Schoen. {\newblock}The spacetime positive mass
  theorem in dimensions less than eight. {\newblock}{\itshape{Journal of the
  European Mathematical Society}}, 18(1):83--121, dec 2015.{\newblock}

  \bibitem[Fed96]{federer-geometric-1996}Herbert Federer.
  {\newblock}{\itshape{Geometric Measure Theory}}. {\newblock}Classics in
  Mathematics. Springer Berlin Heidelberg, Berlin, Heidelberg, 1996.
  {\newblock}DOI: 10.1007/978-3-642-62010-2.{\newblock}

  \bibitem[GL83]{gromov-positive-1983}Mikhael Gromov  and  H.~Blaine Lawson,
  Jr. {\newblock}Positive scalar curvature and the Dirac operator on complete
  Riemannian manifolds. {\newblock}{\itshape{Institut des Hautes {\'E}tudes
  Scientifiques. Publications Math{\'e}matiques}}, (58):83--196,
  1983.{\newblock}

  \bibitem[GMS98]{giaquinta-cartesian-1998}Mariano Giaquinta, Giuseppe Modica,
  and  Jiri Soucek. {\newblock}{\itshape{Cartesian Currents in the Calculus of
  Variations I: Cartesian Currents}}. {\newblock}Springer Science \& Business
  Media, aug 1998.{\newblock}

  \bibitem[Gr{\"u}87a]{gruter-optimal-1987}Michael Gr{\"u}ter.
  {\newblock}Optimal regularity for codimension one minimal surfaces with a
  free boundary. {\newblock}{\itshape{Manuscripta Mathematica}},
  58(3):295--343, sep 1987.{\newblock}

  \bibitem[Gr{\"u}87b]{gruter-regularity-2009}Michael Gr{\"u}ter.
  {\newblock}Regularity results for minimizing currents with a free boundary.
  {\newblock}{\itshape{Journal f{\"u}r die reine und angewandte Mathematik
  (Crelles Journal)}}, 375/376(375-376):307--325, 1987.{\newblock}

  \bibitem[Gr{\"u}90]{gruter-remark-1990}Michael Gr{\"u}ter. {\newblock}A
  Remark on Minimal Surfaces with Corners. {\newblock}In {\itshape{Variational
  Methods}}, Progress in Nonlinear Differential Equations and Their
  Applications,  pages  273--282. Birkh{\"a}user, Boston, MA, 1990.{\newblock}

  \bibitem[HMZ01]{hijazi-eigenvalues-2001}Oussama Hijazi, Sebasti{\'a}n
  Montiel, and  Xiao Zhang. {\newblock}Eigenvalues of the Dirac Operator on
  Manifolds with Boundary. {\newblock}{\itshape{Communications in Mathematical
  Physics}}, 221(2):255--265, jul 2001.{\newblock}

  \bibitem[HS79]{hardt-boundary-1979}Robert Hardt  and  Leon Simon.
  {\newblock}Boundary Regularity and Embedded Solutions for the Oriented
  Plateau Problem. {\newblock}{\itshape{Annals of Mathematics}},
  110(3):439--486, 1979.{\newblock}

  \bibitem[Loh99]{lohkamp-scalar-1999}Joachim Lohkamp. {\newblock}Scalar
  curvature and hammocks. {\newblock}{\itshape{Mathematische Annalen}},
  313(3):385--407, mar 1999.{\newblock}

  \bibitem[MEY63]{meyers-expansion-1963}NORMAN MEYERS. {\newblock}An Expansion
  About Infinity for Solutions of Linear Elliptic Equations.
  {\newblock}{\itshape{Journal of Mathematics and Mechanics}}, 12(2):247--264,
  1963.{\newblock}

  \bibitem[Pet98]{petersen-riemannian-1998}Peter Petersen.
  {\newblock}{\itshape{Riemannian geometry}},  volume  171  of
  {\itshape{Graduate Texts in Mathematics}}. {\newblock}Springer-Verlag, New
  York, 1998.{\newblock}

  \bibitem[PT82]{parker-wittens-1982}Thomas Parker  and  Clifford~Henry
  Taubes. {\newblock}On Witten's proof of the positive energy theorem.
  {\newblock}{\itshape{Communications in Mathematical Physics}},
  84(2):223--238, 1982.{\newblock}

  \bibitem[Sch83]{schoen-uniqueness-1983}Richard~M.~Schoen.
  {\newblock}Uniqueness, symmetry, and embeddedness of minimal surfaces.
  {\newblock}{\itshape{Journal of Differential Geometry}}, 18(4):791--809,
  1983.{\newblock}

  \bibitem[Sch89]{schoen-variational-1989}Richard~M.~Schoen.
  {\newblock}Variational theory for the total scalar curvature functional for
  riemannian metrics and related topics. {\newblock}In  Mariano Giaquinta,
  editor, {\itshape{Topics in Calculus of Variations}},  number  1365  in
  Lecture Notes in Mathematics,  pages  120--154. Springer Berlin Heidelberg,
  1989.{\newblock}

  \bibitem[SY79a]{schoen-structure-1979}R.~Schoen  and  S.~T.~Yau.
  {\newblock}On the structure of manifolds with positive scalar curvature.
  {\newblock}{\itshape{Manuscripta Mathematica}}, 28(1-3):159--183, jan
  1979.{\newblock}

  \bibitem[SY79b]{schoen1979}Richard Schoen  and  Shing~Tung Yau.
  {\newblock}On the proof of the positive mass conjecture in general
  relativity. {\newblock}{\itshape{Communications in Mathematical Physics}},
  65(1):45--76, 1979.{\newblock}

  \bibitem[SY17]{schoen-positive-2017}Richard Schoen  and  Shing-Tung Yau.
  {\newblock}Positive Scalar Curvature and Minimal Hypersurface Singularities.
  {\newblock}{\itshape{ArXiv:1704.05490 [math]}}, apr 2017.{\newblock}

  \bibitem[Wan01]{wang-mass-2001}Xiaodong Wang. {\newblock}The mass of
  asymptotically hyperbolic manifolds. {\newblock}{\itshape{Journal of
  Differential Geometry}}, 57(2):273--299, 2001.{\newblock}

  \bibitem[Wit81]{witten-new-1981}Edward Witten. {\newblock}A new proof of the
  positive energy theorem. {\newblock}{\itshape{Communications in Mathematical
  Physics}}, 80(3):381--402, 1981.{\newblock}

\end{thebibliography}
\end{document}